\numberwithin{equation}{section}
\newtheorem{theorem}{Theorem}[section]
\newtheorem{proposition}[theorem]{Proposition}
\newtheorem{corollary}[theorem]{Corollary}
\newtheorem{lemma}[theorem]{Lemma}
\newtheorem{question}[theorem]{Question}
\newtheorem{problem}[theorem]{Problem}
\newtheorem{remark}[theorem]{Remark}
\newtheorem{defn}[theorem]{Definition}
\theoremstyle{definition}
\newcommand{\GL}{{\mathrm {GL}}}
\newcommand{\Hilb}{{\mathrm {Hilb}}}
\newcommand{\grFrob}{{\mathrm {grFrob}}}
\newcommand{\Frob}{{\mathrm {Frob}}}
\newcommand{\symm}{{\mathfrak{S}}}
\newcommand{\wt}{{\mathrm{wt}}}
\newcommand{\CC}{{\mathbb {C}}}
\newcommand{\ZZ}{{\mathbb {Z}}}
\newcommand{\PP}{{\mathbb {P}}}
\newcommand{\vv}{{\mathbf {v}}}
\newcommand{\LM}{{\textsc {lm}}}
\begin{document}

\title[Lefschetz theory for exterior algebras and fermionic diagonal coinvariants]
{Lefschetz theory for exterior algebras and fermionic diagonal coinvariants}

\author{Jongwon Kim}
\address
{Department of Mathematics \newline \indent
University of Pennsylvania \newline \indent
Philadelphia, PA, 19104-6395, USA}
\email{jk1093@math.upenn.edu}

\author{Brendon Rhoades}
\address
{Department of Mathematics \newline \indent
University of California, San Diego \newline \indent
La Jolla, CA, 92093-0112, USA}
\email{bprhoades@math.ucsd.edu}

\begin{abstract}
Let $W$ be an irreducible complex reflection group acting on its reflection representation $V$.
We consider the doubly graded action of $W$ on the exterior algebra $\wedge (V \oplus V^*)$
as well as its quotient $DR_W := \wedge (V \oplus V^*)/ \langle \wedge (V \oplus V^*)^{W}_+ \rangle$
by the ideal generated by its homogeneous $W$-invariants with vanishing constant term.
We describe the bigraded isomorphism type of $DR_W$; when $W = \symm_n$ is the symmetric group, the
answer is a difference of Kronecker products of hook-shaped $\symm_n$-modules.
We relate the Hilbert series of $DR_W$ to the (type A) Catalan and Narayana numbers and describe 
a standard monomial basis of $DR_W$ using a variant of Motzkin paths.
Our methods are type-uniform and involve a Lefschetz-like theory which applies to the 
exterior algebra $\wedge (V \oplus V^*)$.
\end{abstract}

\keywords{coinvariant algebra, fermion, exterior algebra, Lefschetz element}
\maketitle

\section{Introduction}
\label{Introduction}

Let $\CC[x_1, \dots, x_n, y_1, \dots, y_n]$ be a polynomial ring in $2n$ variables equipped with 
the diagonal action of the symmetric group $\symm_n$:
\begin{equation}
w.x_i := x_{w(i)} \quad \text{and} \quad w.y_i := y_{w(i)} 
\end{equation}
for all $w \in \symm_n$ and $1 \leq i \leq n$.
The quotient of $\CC[x_1, \dots, x_n, y_1, \dots, y_n]$ by the ideal generated by the homogeneous
$\symm_n$-invariants of positive degree 
 is the {\em diagonal coinvariant ring}; its bigraded $\symm_n$-structure was calculated by 
Haiman \cite{Haiman} using algebraic geometry.

In the last couple years, algebraic combinatorialists have studied variations
of the diagonal coinvariants 
involving sets of  commuting and anti-commuting variables \cite{Bergeron, BRT, HRS, PR, RW, SW, Zabrocki}.
In this paper we completely describe the bigraded $\symm_n$-structure of the diagonal 
coinvariants involving two sets of anti-commuting variables (but no commuting variables). 
Our methods apply equally well (and uniformly) to any irreducible complex reflection group $W$\footnote{And, 
in fact, to a wider class of groups $G$; see Remark~\ref{other-g}.}
 as to
the symmetric group $\symm_n$.

Let $W$ be an irreducible complex reflection group of rank $n$ acting on its reflection representation
$V \cong \CC^n$.
The action of $W$ on $V$ induces an action of $W$ on 
\begin{itemize}
\item the dual space $V^* = \mathrm{Hom}_{\CC}(V, \CC)$,
\item the direct sum $V \oplus V^*$ of $V$ with its dual space, and finally
\item the exterior algebra $\wedge (V \oplus V^*)$ over the  $2n$-dimensional vector space $V \oplus V^*$.
\end{itemize}
By placing $V$ in bidegree $(1,0)$ and $V^*$ in bidegree $(0,1)$, this last space 
$\wedge (V \oplus V^*)$ attains the structure of a doubly graded $W$-module.

If we let $\Theta_n = (\theta_1, \dots, \theta_n)$ be a basis for $V$ and 
$\Xi_n = (\xi_1, \dots, \xi_n)$ be a basis for $V^*$,
we have a natural identification
\begin{equation}
\wedge (V \oplus V^*) = \wedge \{ \Theta_n, \Xi_n \}
\end{equation}
of $\wedge (V \oplus V^*)$ with the exterior algebra $\wedge \{ \Theta_n, \Xi_n \}$ generated by the 
symbols $\theta_i$ and $\xi_i$ over $\CC$.
Following the terminology of physics, we refer to the $\theta_i$ and $\xi_i$ as {\em fermionic} variables.
In physics, such variables are used to model fermions, with relations $\theta_i^2 = \xi_i^2 = 0$
corresponding to the Pauli Exclusion Principle: no two fermions can occupy the same state at the same time\footnote{A 
commuting variable $x_i$ is called {\em bosonic}; the power $x_i^2$ corresponds to
two indistinguishable bosons in State $i$.}.
The model $\wedge \{ \Theta_n, \Xi_n \}$ for $\wedge (V \otimes V^*)$ will be helpful in our arguments.
The following quotient ring is our object of study.

\begin{defn}
\label{fermionic-coinvariants}
The {\em fermionic diagonal coinvariant ring} is the quotient
\begin{equation}
DR_W :=  \wedge (V \oplus V^*) / \langle \wedge (V \oplus V^*)^{W}_+ \rangle
\end{equation}
of $\wedge (V \oplus V^*)$ by the (two-sided) ideal generated by the subspace 
$\wedge (V \oplus V^*)^{W}_+ \subseteq \wedge (V \oplus V^*) $ of $W$-invariant elements with 
vanishing constant term.
\end{defn}

The ideal $\langle \wedge (V \oplus V^*)^{W}_+ \rangle$ is $W$-stable and bihomogeneous, so
the quotient ring $DR_W$ has the structure of a bigraded $W$-module.
We will see (Proposition~\ref{casimir-generation}) that this ideal is principal, generated by 
a `Casimir element' $\delta_W \in V \oplus V^*$.
Our results are as follows.

\begin{itemize}
\item  We describe the bigraded $W$-isomorphism type of $DR_W$ in terms 
of the isomorphism types of the exterior powers $\wedge^i V$ and $\wedge^j V^*$
(Theorem~\ref{fermion-module-structure}).
\item  We show that $\dim DR_W = {2n+1 \choose n}$ whenever $W$ has rank $n$ and relate
the dimensions of its graded pieces to Catalan and Narayana numbers
(Corollaries~\ref{dimension-of-dr} and \ref{dimension-of-dr-catalan}).
\item  We describe an explicit monomial basis of $DR_W$ using  a variant of Motzkin paths
(Theorem~\ref{standard-monomial-basis}) and describe the bigraded Hilbert series of 
$DR_W$ in terms of the combinatorics of these paths 
(Corollary~\ref{hilbert-motzkin}).
\item When $W = \symm_n$, in Section~\ref{Permutation} we give variants of the above results 
as they apply to the $n$-dimensional permutation representation of $\symm_n$
(as opposed to its $(n-1)$-dimensional reflection representation).
\end{itemize}

The key tool in our analysis is the realization (Theorem~\ref{delta-lefschetz}) of the Casimir generator 
$\delta_W$ of the ideal defining $DR_W$ as a kind of `$W$-invariant Lefschetz element'
in the ring $\wedge (V \oplus V^*)$.   The ring 
$\wedge (V \oplus V^*)$, similar to the cohomology ring of a compact smooth complex manifold,
satisfies `bigraded' versions of Poincar\'e Duality and the Hard Lefschetz Theorem.
This is somewhat unusual on two counts.
\begin{itemize}
\item Any homogeneous linear form in an exterior algebra squares to zero, and hence is not well-suited
to be a (strong) Lefschetz element.
\item Lefschetz elements arising in 
coinvariant theory are rarely $W$-invariant. For example, if $W$ is a Weyl group with associated
complete flag manifold $G/B$ we may present the cohomology of $G/B$ as 
\begin{equation}
H^{\bullet}(G/B; \CC) = \CC[\mathfrak{h}] / \langle \CC[\mathfrak{h}]^W_+ \rangle
\end{equation}
where $\mathfrak{h}$ is the Cartan subalgebra of the Lie algebra $\mathfrak{g}$ of $G$.
An element $\ell \in  \CC[\mathfrak{h}]_1 = \mathfrak{h}^*$ is a Lefschetz element if and only if it is not fixed by any element of $W$
\cite{MNW}.  So the Lefschetz property is in some sense opposite to $W$-invariance in this case.
\end{itemize}
For examples of coinvariant-like quotients
 of {\em superspace} $\CC[V] \otimes \wedge V^* $ satisfying other nontraditional
bigraded
versions of Poincar\'e Duality and (conjecturally) Hard Lefschetz, see \cite{RW}.

The remainder of the paper is organized as follows.
In {\bf Section~\ref{Background}} we give background material on complex reflection
groups, Gr\"obner theory associated to exterior algebras, and 
the representation theory of $\symm_n$.
In {\bf  Section~\ref{Lefschetz}} we prove that $\wedge (V \oplus V^*)$
satisfies bigraded versions of the Hard Lefschetz Property and Poincar\'e Duality.
This builds on work of Hara and Watanabe \cite{HW} showing that the incidence matrix between
complementary ranks of the Boolean poset $B(n)$ is invertible.
In {\bf Section~\ref{Casimir}} we apply
these Lefschetz results to determine the bigraded $W$-structure of $DR_W$.
In {\bf Section~\ref{Motzkin}} we describe the standard monomial basis of $DR_W$ using 
lattice paths.
In {\bf Section~\ref{Permutation}} we specialize to $W = \symm_n$ and translate our results 
to the setting of the permutation representation of $\symm_n$.
We close in {\bf Section~\ref{Open}} with some open problems.

\section{Background}
\label{Background}

\subsection{Complex reflection groups}
Let $V = \CC^n$ be an $n$-dimensional complex vector space. An element 
$t \in \GL(V) = \GL_n(\CC)$ is a {\em reflection} if its fixed space 
$V^t := \{ v \in V \,:\, t(v) = v \}$ satisfies $\dim V^t = n-1$.

A finite subgroup $W \subseteq GL(V)$ is a {\em complex reflection group} if 
it is generated by reflections.
The $W$-module $V$ is called the {\em reflection representation} of $W$.
The dimension $\dim V = n$ of $V$ is called the {\em rank} of $W$.

If $W_1$ and $W_2$ are reflection groups with reflection representations $V_1$ and $V_2$,
the direct product $W_1 \times W_2$ is naturally a reflection group with reflection representation $V_1 \oplus V_2$.
A reflection group $W$ acting on $V$ is  {\em irreducible} if it is impossible to express $W$
as a direct product $W_1 \times W_2$ of reflection groups acting on $V = V_1 \oplus V_2$ unless $V_1 = 0$
or $V_2 = 0$.

\subsection{Exterior Gr\"obner theory}
Let $\Theta_n = (\theta_1, \dots, \theta_n)$ be a list on $n$ anticommuting variables and let 
$\wedge \{ \Theta_n \}$ be the exterior algebra generated by these variables over $\CC$.
For any subset $S = \{i_1 < i_2 <  \cdots < i_k \} \subseteq \{1, 2, \dots , n \}$, we let 
\begin{equation}
\theta_S := \theta_{i_1} \theta_{i_2} \cdots \theta_{i_k}
\end{equation}
where the multiplication is in increasing order of subscripts.
We refer to the $\theta_S$ as {\em monomials};
the set $\{ \theta_S \,:\, S \subseteq \{1, 2, \dots, n \} \}$ is the monomial basis
of $\wedge \{ \Theta_n \}$.  
Given two monomials $\theta_S$ and $\theta_T$, we write $\theta_S \mid \theta_T$ to mean
$S \subseteq T$.

A total order $<$ on the set $\{ \theta_S \,:\, S \subseteq \{1, 2, \dots, n \} \}$ is a {\em term order} if 
\begin{itemize}
\item we have 
$1 = \theta_{\varnothing} \leq \theta_S$ for all $S$ and
\item for all subsets
 $S, T, U$  with $U \cap S = U \cap T = \varnothing$, 
$\theta_S < \theta_T$ implies $\theta_{S \cup U} < \theta_{T \cup U}$.
\end{itemize}

Given a term order $<$,
for any nonzero element $f \in \wedge \{ \Theta_n \}$, let $\LM(f)$ be the largest monomial 
$\theta_S$ under the total order $<$ such that $\theta_S$ appears with nonzero coefficient in $f$.
If $I \subseteq \wedge \{ \Theta_n \}$ is a two-sided ideal, let 
\begin{equation*}
\LM(I) := \{ \LM(f) \,:\, f \in I - \{0\} \}
\end{equation*}
stand for the set of leading monomials of nonzero elements in $I$.
The collection of {\em standard monomials} (or {\em normal forms}) for $I$ is 
\begin{equation}
N(I) := \{ \text{monomials $\theta_S \,:\, S \subseteq \{1, 2, \dots, n \}  $ and $  \theta_S \notin \LM(I)$} \}.
\end{equation}
The set $N(I)$ of monomials descends to a $\CC$-basis of the quotient 
$\wedge \{ \Theta_n \}/I$; this is the {\em standard monomial basis} with respect to $<$
(see for example \cite{Bremner}).

\subsection{Representation Theory}
If $V = \bigoplus_{i,j \geq 0} V_{i,j}$ is a bigraded vector space with each piece $V_{i,j}$ finite-dimensional,
the {\em bigraded Hilbert series} is 
$\Hilb(V;q,t) := \sum_{i,j \geq 0} \dim V_{i,j} \cdot q^i t^j$.  This is a formal power series in $q$ and $t$.

The irreducible representations of the symmetric group $\symm_n$ 
are in one-to-one correspondence with partitions $\lambda \vdash n$.
Given $\lambda \vdash n$, let $S^{\lambda}$ be the corresponding $\symm_n$-irreducible.
For example, the trivial representation is $S^{(n)}$ and the sign representation is $S^{(1^n)}$.

Let $\Lambda$ denote the ring of symmetric functions and let $\{ s_{\lambda} \,:\, \text{$\lambda$ a partition} \}$
denote its Schur basis.  The {\em Hall inner product} on $\Lambda$ declares the Schur basis to be orthonormal:
\begin{equation}
\langle s_{\lambda}, s_{\mu} \rangle = \delta_{\lambda,\mu}
\end{equation}
for any partitions $\lambda$ and $\mu$.

Any finite-dimensional $\symm_n$-module $U$  may be expressed uniquely as a direct sum
$U \cong \bigoplus_{\lambda \vdash n} c_{\lambda} S^{\lambda}$ for some multiplicities $c_{\lambda}$.
The {\em Frobenius image} of $U$ is the symmetric function
\begin{equation}
\Frob(U) := \sum_{\lambda \vdash n} c_{\lambda} s_{\lambda},
\end{equation}
where $s_{\lambda}$ is the Schur function.  The {\em Kronecker product} of two Schur functions 
$s_{\lambda}$ and $s_{\mu}$ for $\lambda, \mu \vdash n$ is defined by
\begin{equation}
s_{\lambda} * s_{\mu} = \Frob(S^{\lambda} \otimes S^{\mu})
\end{equation}
where $\symm_n$ acts diagonally on $S^{\lambda} \otimes S^{\mu}$.

If $V = \bigoplus_{i,j \geq 0} V_{i,j}$ is a bigraded $\symm_n$-module with each piece 
$V_{i,j}$ finite-dimensional, the {\em bigraded Frobenius image} is 
\begin{equation}
\grFrob(V;q,t) = \sum_{i,j \geq 0} \Frob(V_{i,j}) \cdot q^i t^j.
\end{equation}
This is a formal power series in $q$ and $t$ with coefficients in the ring of symmetric functions.

\section{Lefschetz theory for exterior algebras}
\label{Lefschetz}

Let $A = \bigoplus_{i = 0}^n A_i$ be a commutative graded $\CC$-algebra.  The algebra $A$ satisfies 
 {\em Poincar\'e Duality} (PD) if $A_n \cong \CC$ and if the multiplication map
$A_i \otimes A_{n-i} \rightarrow A_n \cong \CC$ is a perfect pairing for $0 \leq i \leq n$.  
In particular, this implies that the Hilbert series of $A$ is palindromic, i.e.
$\dim A_i = \dim A_{n-i}$.

If $A = \bigoplus_{i = 0}^n A_i$ satisifes PD, an element $\ell \in A_1$ is called a {\em (strong) Lefschetz element}
if for every $0 \leq i \leq n/2$, the linear map
\begin{equation}
\ell^{n-2i} \cdot (-): A_i \rightarrow A_{n-i}
\end{equation}
given by multiplication by $\ell^{n-2i}$ is bijective.  If $A$ has a Lefschetz element,
it is said to satisfy the {\em Hard Lefschetz Property} (HLP).

Algebras $A$ which satisfy PD and the HLP arise naturally in geometry as the cohomology rings
(with adjusted grading) of smooth complex projective varieties. 
The HLP of the cohomology ring of the $n$-fold product $\PP^1 \times \cdots \times \PP^1$ 
of $1$-dimensional complex projective space with itself was studied combinatorially by 
Hara and Watanabe \cite{HW}; we state their results below.

Recall that the {\em Boolean poset} $B(n)$ is the partial order on all subsets $S \subseteq \{1, \dots, n \}$ given by
$S \leq T$ if and only if $S \subseteq T$.  The poset $B(n)$ is graded, with the $i^{th}$ rank given by the family
$B(n)_i$ of $i$-element subsets of $\{1, \dots, n \}$.
Hara and Watanabe proved that the incidence matrix between complementary ranks of $B(n)$ is invertible.

\begin{theorem} 
\label{boolean-incidence}  (Hara-Watanabe \cite{HW})
Given $r \leq s \leq n$, define a ${n \choose s} \times {n \choose r}$ matrix $M_n(r,s)$ with rows
indexed by $B(n)_s$ and columns indexed by $B(n)_r$ with entires
\begin{equation}
M_n(r,s)_{T,S} = \begin{cases}
1 & \text{if $S \subseteq T$} \\
0 & \text{otherwise.}
\end{cases}
\end{equation}
For any $0 \leq i \leq n$, the square matrix $M_n(i,n-i)$ is invertible.
\end{theorem}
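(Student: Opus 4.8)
The plan is to reinterpret $M_n(i,n-i)$ as the matrix of a multiplication map on the algebra $A := \CC[x_1, \dots, x_n]/(x_1^2, \dots, x_n^2)$ --- the cohomology ring of $(\PP^1)^n$ with degrees halved --- and then to prove the Hard Lefschetz Property for $A$ by elementary $\mathfrak{sl}_2$-representation theory. We may assume $i \le n/2$ (the case $i > n/2$ being equivalent under $i \leftrightarrow n-i$), so that the condition $r = i \le s = n-i$ of the definition is met. The algebra $A$ is graded with $A_d$ carrying the monomial basis $\{ x_S := \prod_{j \in S} x_j \,:\, |S| = d \}$, and it satisfies Poincar\'e Duality, the top pairing $A_d \otimes A_{n-d} \to A_n \cong \CC$ being complementation of subsets. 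Set $\ell := x_1 + \cdots + x_n \in A_1$. Expanding $\ell^{\,n-2i}$ and using $x_j^2 = 0$ yields
\begin{equation*}
\ell^{\,n-2i} \cdot x_S \;=\; (n-2i)! \sum_{\substack{T \supseteq S \\ |T| = n-i}} x_T \qquad (|S| = i),
\end{equation*}
so that in the monomial bases the map $\ell^{\,n-2i}\cdot(-) \colon A_i \to A_{n-i}$ has matrix $(n-2i)! \cdot M_n(i,n-i)$, with rows and columns indexed exactly as in the statement. Since $(n-2i)! \neq 0$, the theorem is equivalent to the assertion that $\ell$ is a strong Lefschetz element of $A$.

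To establish this I would write $A = \bigotimes_{j=1}^{n} (\CC \cdot 1 \oplus \CC \cdot x_j)$ and introduce operators $E := \ell \cdot (-)$, $F := \sum_{j=1}^{n} \partial_j$ (where $\partial_j(x_S) = x_{S \setminus \{j\}}$ if $j \in S$ and $0$ otherwise), and $H$ acting on $A_d$ as the scalar $2d - n$. Checking the commutators on a single tensor factor and extending diagonally gives $[E,F] = H$, $[H,E] = 2E$, $[H,F] = -2F$, so $A$ becomes a finite-dimensional $\mathfrak{sl}_2$-module; indeed $A \cong (\CC^2)^{\otimes n}$ with each factor the standard representation (weights $\pm 1$), and the $H$-weight space $A[2d-n]$ equals $A_d$. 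One then invokes the standard $\mathfrak{sl}_2$ fact that for any finite-dimensional module $M$ and any $k \geq 0$ the operator $E^{k}$ restricts to an isomorphism $M[-k] \xrightarrow{\,\sim\,} M[k]$ of weight spaces --- proved by decomposing $M$ into irreducibles $V_m$, on each of which $E^{k}$ carries the one-dimensional weight-$(-k)$ space isomorphically onto the weight-$k$ space when $k \leq m$, while both spaces vanish when $k > m$. Taking $M = A$ and $k = n - 2i$, so $-k = 2i - n$, this says exactly that $E^{\,n-2i} = \ell^{\,n-2i}\cdot(-) \colon A_i \to A_{n-i}$ is an isomorphism, which is what we want.

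The single substantive ingredient here is the $\mathfrak{sl}_2$ input; equivalently one could invoke the Hard Lefschetz Theorem for the smooth projective variety $(\PP^1)^n$ with the very ample class $\ell = c_1(\mathcal{O}(1,\dots,1))$, or the de Bruijn--Tengbergen--Kruyswijk symmetric chain decomposition of $B(n)$, either of which could replace the representation-theoretic step. Everything else is routine bookkeeping: the monomial expansion of $\ell^{\,n-2i}$, the verification of the three $\mathfrak{sl}_2$ relations on one tensor factor, and the identification of the degree grading on $A$ with its $H$-weight grading so that $A_i = A[2i-n]$ and $A_{n-i} = A[n-2i]$. I expect the main obstacle in writing this up to be notational rather than mathematical --- keeping subsets apart from their cardinalities, and degrees apart from weights.
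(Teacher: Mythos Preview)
Your argument is correct. The identification of $M_n(i,n-i)$ with $(n-2i)!^{-1}$ times the matrix of $\ell^{\,n-2i}$ on $A=\CC[x_1,\dots,x_n]/(x_1^2,\dots,x_n^2)$ is exactly right, and the $\mathfrak{sl}_2$-triple $(E,F,H)$ you write down does satisfy the bracket relations (each tensor factor is the standard $2$-dimensional representation, and the operators are the diagonal sums, so the relations reduce to the $2\times 2$ case). The Hard Lefschetz conclusion then follows from the basic $\mathfrak{sl}_2$ fact that $E^k$ is an isomorphism between weight spaces $M[-k]$ and $M[k]$ in any finite-dimensional module.

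Note, however, that the paper does not actually \emph{prove} this theorem; it is quoted from Hara--Watanabe \cite{HW}, whose argument is determinantal (they compute $\det M_n(i,n-i)$ explicitly). Your route is genuinely different and, in a sense, reverses the paper's narrative: the paper says Hara--Watanabe used the invertibility of $M_n(i,n-i)$ to give a combinatorial proof that $\ell=x_1+\cdots+x_n$ is Lefschetz for $H^\bullet((\PP^1)^n)$, whereas you prove Lefschetz directly via $\mathfrak{sl}_2$ and deduce the incidence invertibility. Your approach is more conceptual and generalizes immediately to any tensor product of finite-dimensional $\mathfrak{sl}_2$-modules; the Hara--Watanabe approach buys an explicit formula for the determinant, which your argument does not yield. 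Both are standard, and the alternative proofs you mention (geometric Hard Lefschetz for $(\PP^1)^n$, or the de~Bruijn--Tengbergen--Kruyswijk symmetric chain decomposition) would work equally well.
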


For example, if $n = 4$ and $i = 1$, Theorem~\ref{boolean-incidence} asserts that the $0,1$-matrix
$M_4(1,3)$ given by
\[
\begin{blockarray}{ccccc}
& $\{1\}$ & $\{2\}$ & $\{3\}$ & $\{4\}$  \\
\begin{block}{c(cccc)}
$\{1,2,3\}$ & 1 & 1 & 1 & 0  \\
$\{1,2,4\}$ & 1 & 1 & 0 & 1  \\
$\{1,3,4\}$ &   1 & 0 & 1 & 1  \\
$\{2,3,4\}$ &  0 & 1 & 1 & 1 \\
\end{block}
\end{blockarray}
 \]
 is invertible.
 
The cohomology ring of the $n$-fold product $\PP^1 \times \cdots \times \PP^1$  may be presented as 
\begin{equation}
H^{\bullet}(\PP^1 \times \cdots \times \PP^1; \CC) = \CC[x_1, \dots, x_n]/\langle x_1^2, \dots, x_n^2 \rangle
\end{equation}
where $x_i$ represents the Chern class $c_1(\mathcal{L}_i)$ and $\mathcal{L}_i$ is the dual 
of the tautological line bundle over the $i^{th}$ factor of $\PP^1 \times \cdots \times \PP^1$.
Hara and Watanabe used Theorem~\ref{boolean-incidence} to give a combinatorial proof of the fact that 
$x_1 + \cdots + x_n$ is a Lefschetz element for the ring 
$H^{\bullet}(\PP^1 \times \cdots \times \PP^1; \CC)$ \cite{HW}.

We want to study PD and the HLP in the context of the exterior algebra
$\wedge \{ \Theta_n, \Xi_n \}$.  This algebra satisfies a natural bigraded version of Poincar\'e Duality:
the top bidegree $\wedge \{ \Theta_n, \Xi_n \}_{n,n}$ is $1$-dimensional and the multiplication map
\begin{equation}
\wedge \{ \Theta_n, \Xi_n \}_{i,j} \otimes \wedge \{ \Theta_n, \Xi_n \}_{n-i,n-j} \rightarrow 
\wedge \{ \Theta_n, \Xi_n \}_{n,n} \cong \CC
\end{equation}
is a perfect pairing for any $0 \leq i, j \leq n$.

The notion of a Lefschetz element in $\wedge \{ \Theta_n, \Xi_n \}$ is a bit more subtle because
any linear form $\ell$ in the variables $\theta_1, \dots, \theta_n, \xi_1, \dots, \xi_n$ 
satisfies $\ell^2 = 0$. 
To get around this, we introduce the element
\begin{equation}
\delta_n := \theta_1 \xi_1 + \theta_2 \xi_2 +  \cdots + \theta_n \xi_n \in \wedge \{ \Theta_n, \Xi_n \}_{1,1}
\end{equation}
The following result states that $\delta_n$ is a bigraded version of a Lefschetz element
for the ring $\wedge \{ \Theta_n, \Xi_n \}$.

\begin{theorem}
\label{delta-lefschetz}
Suppose $i + j \leq n$ and let $r = n-i-j$. The linear map 
\begin{equation}
\varphi: \wedge \{\Theta_n, \Xi_n \}_{i,j}
 \xrightarrow{\, \, \, \delta_n^r \cdot \, \, \, } 
 \wedge \{\Theta_n, \Xi_n \}_{n-j,n-i}
\end{equation}
given by multiplication by $\delta_n^r$
is bijective.
\end{theorem}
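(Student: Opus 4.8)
The plan is to reduce the bigraded statement to the single-grading result of Hara–Watanabe (Theorem~\ref{boolean-incidence}). First I would fix $i+j \le n$, set $r = n-i-j$, and compute $\delta_n^r$ explicitly by the multinomial expansion: since the summands $\theta_k \xi_k$ commute with one another (each has even degree $2$) and satisfy $(\theta_k \xi_k)^2 = 0$, we get
\begin{equation}
\delta_n^r = r! \sum_{\substack{A \subseteq \{1,\dots,n\} \\ |A| = r}} \theta_A \xi_A \pm (\text{sign corrections}),
\end{equation}
where $\theta_A \xi_A$ denotes the product of $\theta_k \xi_k$ over $k \in A$ in increasing order. The key point is that $\delta_n^r$ is, up to the nonzero scalar $r!$, a signed sum over $r$-subsets $A$ of the "diagonal" monomials $\theta_A \xi_A$.

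Next I would choose convenient bases. A basis for $\wedge\{\Theta_n,\Xi_n\}_{i,j}$ is $\{\theta_S \xi_T : |S| = i,\ |T| = j\}$, and a basis for $\wedge\{\Theta_n,\Xi_n\}_{n-j,n-i}$ is $\{\theta_{S'}\xi_{T'} : |S'| = n-j,\ |T'| = n-i\}$. Multiplying $\theta_S \xi_T$ by $\theta_A \xi_A$ gives (up to sign) $\theta_{S \cup A}\xi_{T \cup A}$ when $A$ is disjoint from both $S$ and $T$, and $0$ otherwise. So the matrix of $\varphi$ in these bases has a block/tensor structure governed by the incidence relations "$S \subseteq S'$" and "$T \subseteq T'$" in the Boolean lattice, with the added constraint that the two complements $S' \setminus S$ and $T' \setminus T$ coincide (both equal $A$). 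The cardinalities match: $|S'| - |S| = (n-j) - i = r$ and $|T'| - |T| = (n-i) - j = r$, consistent with $|A| = r$. After tracking the Koszul signs, I would argue that $\varphi$ is represented (up to multiplication by diagonal $\pm 1$ matrices on each side, and the scalar $r!$) by a matrix built from the Hara–Watanabe incidence matrices $M_n(\cdot,\cdot)$; the main work is to identify the right such matrix.

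Here is where the main obstacle lies: the constraint forcing $S' \setminus S = T' \setminus T$ means $\varphi$ is \emph{not} literally a Kronecker product $M_n(i, n-j) \otimes M_n(j, n-i)$, which would be too big. The honest reformulation is this: index the domain basis by pairs $(S,T)$ with $|S|=i$, $|T|=j$ and note $S$ and $T$ need not be disjoint. The cleanest route is a change of variables. For a monomial $\theta_S \xi_T$, consider $S \cap T$; since $A$ must avoid both $S$ and $T$, only $S \cup T$ matters for determining which $A$'s contribute. I would substitute $U = S \cup T$ and reorganize. An even slicker approach: the map $\theta_S \xi_T \mapsto$ (the subset $S \cup T$, decorated by which elements lie in $S$, in $T$, in both) shows that $\varphi$ decomposes as a direct sum over the "overlap pattern" of blocks, each of which is, after relabeling, an instance of $M_m(k, m-k)$ on a Boolean sublattice $B(m)$ for appropriate $m \le n$ and $k$ — precisely because within a fixed pattern of the non-diagonal part, adding the diagonal set $A$ is exactly the incidence operation between complementary ranks. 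I expect the bookkeeping of signs and the precise block decomposition to be the technical heart; once it is in place, invertibility of each block is immediate from Theorem~\ref{boolean-incidence}, and hence $\varphi$ is bijective. An alternative, perhaps cleaner, strategy worth pursuing is induction on $n$: split off the last pair of variables $\theta_n, \xi_n$, write $\delta_n = \delta_{n-1} + \theta_n \xi_n$, expand $\delta_n^r$ binomially (the two terms commute), and use the four-term decomposition of $\wedge\{\Theta_n,\Xi_n\}$ according to whether $\theta_n$ and/or $\xi_n$ appears, reducing to the $n-1$ case in each block; the hard part there is handling the cross terms where $\theta_n \xi_n$ must be supplied by the $\theta_n\xi_n$ factor, which again amounts to an incidence-matrix argument but now in lower rank.
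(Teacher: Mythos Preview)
Your proposal is correct and follows essentially the same route as the paper: decompose the matrix of $\varphi$ into blocks indexed by the pair $(S-T,\,T-S)$ (your ``overlap pattern of the non-diagonal part''), observe that within each block the varying datum is the intersection $S\cap T$ inside the complement of $(S-T)\cup(T-S)$, and identify each block with a Hara--Watanabe incidence matrix $M_m(k,m-k)$ on a smaller Boolean lattice (the paper verifies the complementarity $k+(k+r)=m$ explicitly). The one refinement the paper supplies over your sketch is a concrete sign-killing device: instead of the standard monomials $\theta_S\xi_T$ it uses the reordered basis $\vv(A,B) := \prod_{c\in A\cap B}\xi_c\theta_c \cdot \prod_{a\in A-B}\theta_a \cdot \prod_{b\in B-A}\xi_b$, for which $\delta_n\cdot \vv(A,B)=\sum_{c\notin A\cup B}\vv(A\cup c,B\cup c)$ with \emph{no} signs, so your anticipated diagonal $\pm 1$ conjugation is realized by this change of basis and no Koszul bookkeeping is needed.
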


\begin{proof}
The idea is to introduce strategically chosen bases of the domain and target of $\varphi$ and show 
that the matrix representing $\varphi$ with respect to these bases is invertible using
Theorem~\ref{boolean-incidence}.

Given two subsets
$A, B \subseteq \{1, \dots, n\}$, write
\begin{align*}
A - B &= \{ a_1 < a_2 < \cdots < a_r \} \\
B - A &= \{ b_1 < b_2 < \cdots < b_s\} \\
A \cap B &= \{ c_1 < c_2 < \cdots < c_t \}
\end{align*}
and set
\begin{equation}
\vv(A,B) := \xi_{c_1} \theta_{c_1} \xi_{c_2} \theta_{c_2} \cdots \xi_{c_t} \theta_{c_t}  \cdot
\theta_{a_1} \theta_{a_2} \cdots \theta_{a_r} \cdot \xi_{b_1} \xi_{b_2} \cdots \xi_{b_s}.
\end{equation}
The family
$\{ \vv(A,B) \,:\, A, B \subseteq \{1, \dots, n\} \}$ is a basis of $\wedge \{ \Theta_n, \Xi_n \}$. 
For any sets $A$ and $B$, a direct computation shows
\begin{equation}
\label{basic-transition}
\delta_n \cdot \vv(A,B) = \sum_{c \notin A \cup B} \vv(A \cup c, B \cup c).
\end{equation}
The somewhat unusual variable order in the product $\vv(A,B)$ was chosen strategically so that 
Equation~\eqref{basic-transition} does not contain any signs.

Now suppose $|A| = i$ and $|B| = j$ for $i + j \leq n$ and set $r = n - i - j$.
Iterating Equation~\eqref{basic-transition} yields
\begin{equation}
\label{transition-equation}
\delta_n^r \cdot \vv(A,B) = 
\sum_{\substack{|C| = n-j, \, |D| = n-i \\ A \subseteq C, \, \, B \subseteq D \\ |C \cap D| - |A \cap B| = r}}  \vv(C,D).
\end{equation}

We need to show that the (square) matrix of dimensions 
${n \choose i} \cdot {n \choose j} \times {n \choose n-j} \cdot {n \choose n-i}$ defined by the system
 \eqref{transition-equation} is invertible.
 For any $\vv(C,D)$ appearing in on the right-hand side of \eqref{transition-equation}
we have $ A - B = C - D =: I$ and $B - A = D - C =: J$. 
The matrix representing $\varphi$ therefore 
breaks up as a direct sum of smaller matrices indexed by the two sets
$I$ and $J$,
so we only need to show that every $(I,J)$-submatrix is invertible.

For fixed $I$ and $J$, the submatrix in the previous paragraph is determined by the system
\begin{equation}
\label{transition-equation-two}
\delta_n^r \cdot \vv(A,B) = \sum_{\substack{T \subseteq S \\ |T| = r}} 
r! \cdot \vv(A \cup T, B \cup T)
\end{equation}
where $S := \{1, \dots, n \} - (A \cup B)$.
The system \eqref{transition-equation-two} represents a linear map
\begin{multline}
\mathrm{span} \{ \vv(A,B) \,:\, |A| = i, \, |B| = j, \, A - B = I, \, B - A = J \} \rightarrow   \\
\mathrm{span} \{ \vv(C,D) \,:\, |C| = n-j, \, |D| = n-i, \, C - D = I, \, D - C = J \}
\end{multline}
 where all sets $A, B, C, D$ are subsets of $\{1, \dots, n \}$.
 In particular, the system \eqref{transition-equation-two}
represents a square matrix of size ${n - |I| - |J| \choose k}$ where $k := i - |I| = j - |J|$
is the size of the intersection $|A \cap B|$ for any element $\vv(A,B)$ appearing in the LHS.
If we let $S' := \{1, \dots, n\} - (I \cup J)$, 
The invertibility of the system \eqref{transition-equation-two} is equivalent to the invertibility 
of the system
\begin{equation}
\label{transition-equation-three}
\delta_n^r \cdot \vv(R,R) = \sum_{\substack{T \subseteq S' \\ |T| = r}} 
r! \cdot \vv(R \cup T, R \cup T)
\end{equation}
representing a linear map
\begin{equation}
\mathrm{span} \{ \vv(R,R) \,:\, R \subseteq S' \text{ and } |R| = k \} \rightarrow 
\mathrm{span} \{ \vv(R',R') \,:\, R' \subseteq S' \text{ and } |R'| = k+r \}.
\end{equation}
Observe that 
\begin{equation}
k + (k + r) = (i - |I|) + (j - |J|) + n - i - j = n - |I| - |J| = |S'|
\end{equation}
so that 
the system \eqref{transition-equation-three} is invertible by Theorem~\ref{boolean-incidence}.
\end{proof}

\section{Casimir elements and fermionic diagonal coinvariants}
\label{Casimir}

Let $W$ be an irreducible reflection group with reflection representation $V = \CC^n$.  
Let $\theta_1, \dots, \theta_n$
be a basis of $V$. Given the choice of 
$\theta_1, \dots, \theta_n$, we let  $\xi_1, \dots, \xi_n$ be the {\em dual basis} of $V^*$ characterized by 
\begin{equation}
\xi_i(\theta_j) = \delta_{i,j}.
\end{equation}
We rename the element 
$\delta_n = \theta_1 \xi_1 + \cdots + \theta_n \xi_n$ of 
$\wedge (V \oplus V^*)$ studied in the previous section as $\delta_W$:
\begin{equation}
\delta_W := \delta_n = \theta_1 \xi_1 + \cdots + \theta_n \xi_n \in V \otimes V^* \subseteq 
\wedge (V \oplus V^*).
\end{equation}
We refer to $\delta_W$ as the {\em Casimir element} of $W$.

The full general linear group $\GL(V)$ acts on $\wedge (V \oplus V^*)$ and it is not difficult 
to check using elementary matrices and the dual basis property that $\delta_W$ 
is invariant under this action.  Equivalently, the Casimir element $\delta_W$ is independent of 
the choice of basis $\theta_1, \dots, \theta_n$.
In particular, the element $\delta_W$ lies in the $W$-invariant subring
$\wedge (V \oplus V^*)^{W}$ of $\wedge (V \oplus V^*)$.  In fact,

\begin{proposition}
\label{casimir-generation}
The Casimir element $\delta_W$ generates the $W$-invariant subring
$\wedge (V \oplus V^*)^{W}$ of $\wedge (V \oplus V^*)$.
\end{proposition}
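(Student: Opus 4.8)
The plan is to compute the invariant subring $\wedge (V \oplus V^*)^W$ outright and then observe that the powers of $\delta_W$ already fill it up. Since $\wedge (V \oplus V^*) \cong \bigoplus_{i,j \geq 0} \wedge^i V \otimes \wedge^j V^*$ as bigraded $W$-modules and the operation of taking $W$-invariants commutes with finite direct sums, it suffices to understand each bigraded piece $(\wedge^i V \otimes \wedge^j V^*)^W$. Using the canonical $W$-equivariant perfect pairing $\wedge^j V \times \wedge^j V^* \to \CC$, we have $\wedge^j V^* \cong (\wedge^j V)^*$, and hence the standard identity $(\wedge^i V \otimes \wedge^j V^*)^W \cong \mathrm{Hom}_W(\wedge^j V, \wedge^i V)$.

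The key input is the classical fact that, for an irreducible complex reflection group $W$ of rank $n$ with reflection representation $V$, the exterior powers $\wedge^0 V, \wedge^1 V, \dots, \wedge^n V$ are irreducible $W$-modules which are pairwise non-isomorphic; this would be recalled or cited in Section~\ref{Background}. Granting it, Schur's lemma gives $\mathrm{Hom}_W(\wedge^j V, \wedge^i V) = 0$ for $i \neq j$, while $\mathrm{Hom}_W(\wedge^i V, \wedge^i V) \cong \CC$. Therefore $\wedge (V \oplus V^*)^W = \bigoplus_{i=0}^n (\wedge^i V \otimes \wedge^i V^*)^W$ is a sum of $n+1$ lines, one in each bidegree $(0,0), (1,1), \dots, (n,n)$; in particular $\dim_{\CC} \wedge (V \oplus V^*)^W = n+1$.

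It remains to match this with $\delta_W$. Since $\delta_W$ is $W$-invariant of bidegree $(1,1)$, each power $\delta_W^i$ is a $W$-invariant of bidegree $(i,i)$, so I only need to check that $\delta_W^i \neq 0$ for $0 \leq i \leq n$. The quickest route: in $\wedge (V \oplus V^*)$ each element $\theta_a \xi_a$ has even total degree, so these commute pairwise and $(\theta_a \xi_a)^2 = 0$; expanding gives $\delta_W^i = i! \sum_{|S| = i} \prod_{a \in S} \theta_a \xi_a$, which is nonzero precisely when $i \leq n$. (Alternatively, the case $i = j = 0$ of Theorem~\ref{delta-lefschetz} already gives $\delta_W^n \neq 0$, and then the identity $\delta_W^i \cdot \delta_W^{n-i} = \delta_W^n$ forces $\delta_W^i \neq 0$ for $0 \leq i \leq n$.) Thus for each $i$ the nonzero element $\delta_W^i$ spans the one-dimensional space $(\wedge^i V \otimes \wedge^i V^*)^W$, and so the subalgebra $\CC[\delta_W] = \mathrm{span}_{\CC}\{1, \delta_W, \dots, \delta_W^n\}$ coincides with $\wedge (V \oplus V^*)^W$, as desired.

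The main obstacle is isolated entirely in the second paragraph: the irreducibility and mutual inequivalence of the $\wedge^i V$ is precisely where the hypothesis that $W$ is an \emph{irreducible} reflection group is used, and it is the only step that is not pure bookkeeping. For Weyl and Coxeter groups this is standard; for the remaining complex reflection groups one either invokes a type-uniform argument or falls back on the Shephard--Todd classification. Everything else — the direct-sum decomposition, the $\mathrm{Hom}$/invariants identity, and the nonvanishing of the powers $\delta_W^i$ — is routine.
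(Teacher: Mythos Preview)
Your proof is correct and essentially the same as the paper's: both reduce to the fact that $\wedge^0 V, \dots, \wedge^n V$ are pairwise nonisomorphic irreducibles (the paper cites this as a result of Steinberg in \cite[Thm.\ A, \S 24-3]{Kane}), then count invariants in each bidegree and check that the powers $\delta_W^i$ are nonzero. The only cosmetic difference is that you phrase the dimension count via $\mathrm{Hom}_W(\wedge^j V,\wedge^i V)$ and Schur's lemma, while the paper phrases it via character orthogonality applied to $(U \otimes U')^G$; these are the same statement.
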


\begin{proof}
Let $G$ be a finite group and let $U, U'$ be irreducible complex representations of $G$.
The tensor product $U \otimes U'$ is a $G$-module by the rule $g.(u \otimes u') := (g.u) \otimes (g.u')$
for $g \in G, u \in U,$ and $u' \in U'$.  (This is the {\em Kronecker product} of the modules $U$ and $U'$.)
Character orthogonality implies 
\begin{align}
\text{multiplicity of the trivial $G$-module in $U \otimes U'$} &= \dim (U \otimes U')^G \\ &=
\begin{cases}
1 & U' \cong U^* \\
0 & \text{otherwise.}
\end{cases}
\end{align}

Since $W$ is an irreducible complex reflection group, a result of Steinberg
(see \cite[Thm. A, \S 24-3, p. 250]{Kane})
implies that the exterior
powers $\wedge^0 V, \wedge^1 V, \dots, \wedge^n V$ are pairwise
nonisomorphic irreducible representations of $W$.
The same is true of their duals $\wedge^0 V^*, \wedge^1 V^*, \dots, \wedge^n V^*$.
Since the $(i,j)$-bidegree of $\wedge (V \oplus V^*)$ is given by 
$\wedge (V \oplus V^*)_{i,j} = \wedge^i V \otimes \wedge^j V^*$, the argument of the last paragraph 
gives
\begin{equation}
\dim \wedge (V \oplus V^*)_{i,j}^W = \begin{cases}
1 & i = j \\
0 & i \neq j
\end{cases}
\end{equation}
for any $0 \leq i, j \leq n$.
On the other hand, we have $\delta_W \in \wedge (V \oplus V^*)^W_{1,1}$ and a quick computation 
shows that the powers $\delta_W^0, \delta_W^1, \dots, \delta_W^n$ are nonzero.
The proposition follows. 
\end{proof}

We are ready to describe the bigraded $W$-module structure of $DR_W$. 
We state our answer in terms of the {\em Grothendieck ring} of $W$. 
Recall that this is the $\ZZ$-algebra generated by the symbols $[U]$ where $U$ is a finite-dimensional
$W$-module and subject to a relation $[U] = [U'] + [U'']$ for any short exact sequence
\begin{equation}
0 \rightarrow U' \rightarrow U \rightarrow U'' \rightarrow 0.
\end{equation}
In particular, if $U \cong U'$ we have $[U] = [U']$.
Multiplication in the Grothendieck ring corresponds to Kronecker product, i.e.
$[U] \cdot [U'] := [U \otimes U']$.

\begin{theorem}
\label{fermion-module-structure}
Let $W$ be an irreducible complex reflection group acting on its reflection representation $V = \CC^n$ and let
$0 \leq i, j \leq n$.
If $i + j > n$ we have $(DR_W)_{i,j} = 0$.  If $i + j \leq n$, inside the Grothendieck group of $W$ we have
\begin{equation}
[ (DR_W)_{i,j} ] =   [ \wedge^i V ] \cdot [ \wedge^j V^*] - [\wedge^{i-1} V ] \cdot [ \wedge^{j-1} V^*]
\end{equation}
where we interpret $\wedge^{-1} V = \wedge^{-1} V^* = 0$.
\end{theorem}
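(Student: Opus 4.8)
The plan is to deduce the module structure of $DR_W$ directly from the Lefschetz theorem (Theorem~\ref{delta-lefschetz}) together with Proposition~\ref{casimir-generation}. By Proposition~\ref{casimir-generation}, the ideal $\langle \wedge(V \oplus V^*)^W_+ \rangle$ is the principal two-sided ideal generated by $\delta_W$, so in bidegree $(i,j)$ we have $(DR_W)_{i,j} = \wedge(V\oplus V^*)_{i,j} / \big(\delta_W \cdot \wedge(V \oplus V^*)_{i-1,j-1}\big)$, where the image is the span of $\delta_W \cdot u$ over $u \in \wedge(V\oplus V^*)_{i-1,j-1} = \wedge^{i-1}V \otimes \wedge^{j-1}V^*$. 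Since $\delta_W$ is $\GL(V)$-invariant, hence $W$-invariant, multiplication by $\delta_W$ is a map of $W$-modules. So the first step is to record the short exact sequence of $W$-modules
\begin{equation}
\wedge^{i-1} V \otimes \wedge^{j-1} V^* \xrightarrow{\ \delta_W \cdot\ } \wedge^i V \otimes \wedge^j V^* \longrightarrow (DR_W)_{i,j} \longrightarrow 0,
\end{equation}
which already shows $[(DR_W)_{i,j}] = [\wedge^i V \otimes \wedge^j V^*] - [\text{image}]$.

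The crux is then to show that multiplication by $\delta_W$ on bidegree $(i-1,j-1)$ is injective whenever $i+j \le n$ (equivalently $(i-1)+(j-1) \le n-2 < n$), so that the image is isomorphic to all of $\wedge^{i-1}V \otimes \wedge^{j-1}V^*$ and the exact sequence above becomes short exact on the left as well; and separately to show that when $i+j > n$ the quotient vanishes. For the injectivity: set $i' = i-1$, $j' = j-1$, and $r = n - i' - j' = n-i-j+2 \ge 2$. Theorem~\ref{delta-lefschetz} says $\delta_n^{r} : \wedge\{\Theta_n,\Xi_n\}_{i',j'} \to \wedge\{\Theta_n,\Xi_n\}_{n-j',n-i'}$ is bijective; since $\delta_W = \delta_n$ and $\delta_n^r = \delta_n^{r-1} \cdot \delta_n$, the bijectivity of the composite forces the first factor $\delta_n : \wedge\{\Theta_n,\Xi_n\}_{i',j'} \to \wedge\{\Theta_n,\Xi_n\}_{i'+1,j'+1}$ to be injective. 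This is the key point, and the only subtlety is bookkeeping the indices correctly — note $r-1 \ge 1$ so the "remaining" power $\delta_n^{r-1}$ is genuinely applied. For the vanishing when $i+j > n$: here the target bidegree $(i,j)$ of $\wedge(V \oplus V^*)$ is itself nonzero only... wait — actually $\wedge^i V \otimes \wedge^j V^*$ is nonzero for all $0 \le i,j \le n$, so I need a real argument. The cleanest route is to apply Theorem~\ref{delta-lefschetz} with roles arranged so that $\delta_n : \wedge\{\Theta_n,\Xi_n\}_{i-1,j-1} \to \wedge\{\Theta_n,\Xi_n\}_{i,j}$ is \emph{surjective} when $(i-1)+(j-1) \ge n-1$, i.e. $i+j \ge n+1$: indeed, whenever $i+j \ge n+1$ one has $(n-j) + (n-i) = 2n - i - j \le n-1 < n$, so applying Theorem~\ref{delta-lefschetz} in the bidegrees $(n-j, n-i)$ with exponent $\tilde r = n - (n-j) - (n-i) = i+j-n \ge 1$ gives a bijection $\delta_n^{\tilde r} : \wedge\{\Theta_n,\Xi_n\}_{n-j,n-i} \to \wedge\{\Theta_n,\Xi_n\}_{i,j}$; writing $\delta_n^{\tilde r} = \delta_n \cdot \delta_n^{\tilde r - 1}$ shows $\delta_n : \wedge\{\Theta_n,\Xi_n\}_{i-1,j-1} \to \wedge\{\Theta_n,\Xi_n\}_{i,j}$ is surjective, hence $(DR_W)_{i,j} = 0$.

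Assembling these: for $i+j \le n$, multiplication by $\delta_W$ on bidegree $(i-1,j-1)$ is injective (with the convention $\wedge^{-1}V = \wedge^{-1}V^* = 0$ handling the boundary cases $i=0$ or $j=0$ trivially, where the map has zero domain and the claimed formula reads $[(DR_W)_{0,j}] = [\wedge^0 V]\cdot[\wedge^j V^*]$), so the image is a $W$-submodule isomorphic to $\wedge^{i-1}V \otimes \wedge^{j-1}V^*$, and the short exact sequence gives exactly
\begin{equation}
[(DR_W)_{i,j}] = [\wedge^i V]\cdot[\wedge^j V^*] - [\wedge^{i-1}V]\cdot[\wedge^{j-1}V^*]
\end{equation}
in the Grothendieck ring. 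For $i+j > n$ we have shown $(DR_W)_{i,j} = 0$. I expect the main obstacle to be purely organizational: carefully checking that the image of $\delta_W$ is genuinely a $W$-subrepresentation (immediate from $W$-invariance of $\delta_W$) and that the index arithmetic $r = n-i-j$ versus the shifted $r = n-(i-1)-(j-1)$ is applied to the right bidegrees, since Theorem~\ref{delta-lefschetz} is stated with a slightly asymmetric target $\wedge\{\Theta_n,\Xi_n\}_{n-j,n-i}$ rather than $\wedge\{\Theta_n,\Xi_n\}_{n-i,n-j}$. No further input beyond Theorem~\ref{delta-lefschetz} and Proposition~\ref{casimir-generation} is needed.
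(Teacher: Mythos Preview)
Your proof is correct and follows essentially the same strategy as the paper's: use Proposition~\ref{casimir-generation} to reduce to the principal ideal $\langle \delta_W \rangle$, then apply Theorem~\ref{delta-lefschetz} to deduce that multiplication by $\delta_W$ is injective into bidegree $(i,j)$ when $i+j \le n$ and surjective onto bidegree $(i,j)$ when $i+j > n$, reading off the Grothendieck-ring identity from the resulting short exact sequence. Your index bookkeeping (taking $r = n-(i-1)-(j-1) = n-i-j+2$ in the injectivity step) is in fact slightly cleaner than the paper's, which writes $r = n-i-j+1$ there; aside from that and the informal aside midway through, the two arguments are the same.
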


\begin{proof}
Thanks to Proposition~\ref{casimir-generation} we can 
 model $DR_W$ as 
\begin{equation}
\label{dr-decomposition}
DR_W = \wedge (V \oplus V^*) / \langle \delta_W \rangle =
\wedge \{ \Theta_n, \Xi_n \} / \langle \delta_n \rangle.
\end{equation}
If $i = 0$ or $j = 0$, the claim follows since $\delta_W$ lies in bidegree $(1,1)$, so assume $i, j > 0$.

If $i + j \leq n$, let $r = n - i - j + 1$. Theorem~\ref{delta-lefschetz} implies that the multiplication map
\begin{equation}
\delta_W^r \cdot (-) : \wedge (V \oplus  V^*)_{i-1,j-1} \rightarrow 
\wedge (V \oplus V^*)_{n-j+1, n-i+1}
\end{equation}
is a linear isomorphism.  Whenever a composition $f \circ g$ of maps is a bijection, the map $g$ is injective 
and the map $f$ is surjective.  Therefore, the map
\begin{equation}
\delta_W \cdot (-) : \wedge (V \oplus  V^*)_{i-1,j-1} \rightarrow 
\wedge (V \oplus V^*)_{i,j}
\end{equation}
is an injection which we know to be $W$-equivariant.
The claimed decomposition of $[(DR_W)_{i,j}]$ follows from \eqref{dr-decomposition}.

Now suppose $i + j > n$.
By analogous reasoning
\begin{equation}
\delta_W \cdot (-) : \wedge (V \oplus  V^*)_{i-1,j-1} \rightarrow 
\wedge (V \oplus V^*)_{i,j}
\end{equation}
is a $W$-equivariant surjection so that $(DR_W)_{i,j} = 0$.
\end{proof}

\begin{corollary}
\label{dimension-of-dr}
If $W$ has rank $n$,
the vector space dimension of $DR_W$ is 
$\dim DR_W = {2n + 1 \choose n}$.
\end{corollary}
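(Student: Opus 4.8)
The plan is to compute $\dim DR_W$ by summing the dimensions of its bigraded pieces using the formula from Theorem~\ref{fermion-module-structure}. First I would observe that, taking dimensions in the Grothendieck group, the identity $[(DR_W)_{i,j}] = [\wedge^i V]\cdot[\wedge^j V^*] - [\wedge^{i-1} V]\cdot[\wedge^{j-1} V^*]$ yields $\dim (DR_W)_{i,j} = \binom{n}{i}\binom{n}{j} - \binom{n}{i-1}\binom{n}{j-1}$ for $i+j\le n$ (interpreting binomial coefficients with negative lower entry as $0$), while $\dim(DR_W)_{i,j}=0$ for $i+j>n$. So the task reduces to evaluating
\begin{equation}
\dim DR_W = \sum_{\substack{i,j\ge 0 \\ i+j\le n}} \left( \binom{n}{i}\binom{n}{j} - \binom{n}{i-1}\binom{n}{j-1} \right).
\end{equation}

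Next I would recognize this as a telescoping sum over antidiagonals. Grouping terms by the value $k=i+j$ and using the Vandermonde identity $\sum_{i+j=k}\binom{n}{i}\binom{n}{j}=\binom{2n}{k}$, the sum becomes $\sum_{k=0}^{n}\binom{2n}{k} - \sum_{k=0}^{n}\left(\sum_{i+j=k}\binom{n}{i-1}\binom{n}{j-1}\right)$. In the second double sum, reindexing $i'=i-1$, $j'=j-1$ turns $\sum_{i+j=k}\binom{n}{i-1}\binom{n}{j-1}$ into $\sum_{i'+j'=k-2}\binom{n}{i'}\binom{n}{j'}=\binom{2n}{k-2}$. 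Hence
\begin{equation}
\dim DR_W = \sum_{k=0}^{n}\binom{2n}{k} - \sum_{k=0}^{n}\binom{2n}{k-2} = \sum_{k=0}^{n}\binom{2n}{k} - \sum_{k=0}^{n-2}\binom{2n}{k} = \binom{2n}{n}+\binom{2n}{n-1}.
\end{equation}
Finally, $\binom{2n}{n}+\binom{2n}{n-1}=\binom{2n+1}{n}$ by Pascal's rule, which is the claimed answer.

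Alternatively — and this is perhaps cleaner to write — I could avoid the antidiagonal bookkeeping by summing over $j$ first for fixed $i$: $\sum_{j=0}^{n-i}\binom{n}{j} - \sum_{j=0}^{n-i}\binom{n}{j-1} = \sum_{j=0}^{n-i}\binom{n}{j} - \sum_{j=0}^{n-i-1}\binom{n}{j} = \binom{n}{n-i}=\binom{n}{i}$, so $\dim (DR_W) = \sum_{i=0}^{n}\binom{n}{i}\binom{n}{i}\cdot[\text{correction}]$ — wait, this needs care since the $\binom{n}{i-1}\binom{n}{j-1}$ cross term doesn't factor that simply over the truncated range; I would instead keep the two pieces separate: $\sum_{i,j: i+j\le n}\binom{n}{i}\binom{n}{j}$ versus $\sum_{i,j:i+j\le n}\binom{n}{i-1}\binom{n}{j-1}$, and the latter equals $\sum_{i,j: i+j\le n-2}\binom{n}{i}\binom{n}{j}$ after shifting, so the difference is exactly the sum of $\binom{n}{i}\binom{n}{j}$ over $n-1\le i+j\le n$, namely $\binom{2n}{n}+\binom{2n}{n-1}=\binom{2n+1}{n}$. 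There is essentially no obstacle here; the only thing to be careful about is the boundary behavior of the $\wedge^{-1}=0$ convention and making sure the telescoping ranges line up, which the explicit reindexing above handles.
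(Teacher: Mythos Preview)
Your proof is correct and follows essentially the same approach as the paper: both take dimensions in Theorem~\ref{fermion-module-structure}, telescope over the triangular region $i+j\le n$ so that only the two outermost antidiagonals $i+j=n$ and $i+j=n-1$ survive, evaluate each via Vandermonde to $\binom{2n}{n}$ and $\binom{2n}{n-1}$, and finish with Pascal. The paper simply writes down those two antidiagonal sums without spelling out the telescoping, whereas you make the cancellation explicit; mathematically there is no difference. (As a matter of exposition, the ``wait'' detour in your alternative paragraph should be excised before a final write-up.)
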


\begin{proof}  Since $\dim \wedge^k V = \dim \wedge^k V^* = {n \choose k}$ where $V$
is the reflection representation of $W$,
Theorem~\ref{fermion-module-structure} yields
\begin{align}
\dim DR_W &= \sum_{k = 0}^n {n \choose k} {n \choose n-k} + \sum_{j = 1}^n {n \choose j-1}{n \choose n - j} \\
&= {2n \choose n} + {2n \choose n-1} \\
&= {2n+1 \choose n}
\end{align}
by the Pascal recursion.
\end{proof}

Recall that the {\em Catalan} and {\em Narayana} numbers are given by
\begin{equation}
\mathrm{Cat}(n) := \frac{1}{n+1}{2n \choose n} \quad \text{and} \quad 
\mathrm{Nar}(n,k) := \frac{1}{n} {n \choose k} {n \choose k-1}.
\end{equation}
We have $\sum_{k = 1}^n \mathrm{Nar}(n,k) = \mathrm{Cat}(n)$.
These numbers have many combinatorial interpretations; for example, $\mathrm{Cat}(n)$ counts
the number of Dyck paths of size $n$ and $\mathrm{Nar}(n,k)$ counts the number of such paths
with $k-1$ peaks.
The Catalan and Narayana numbers show up as the dimensions of the `boundary' pieces
of $DR_W$.

\begin{corollary}
\label{dimension-of-dr-catalan}
If $W$ has rank $n$, we have
\begin{equation}
\dim (DR_W)_{k,n-k} = \mathrm{Nar}(n+1,k+1) 
\end{equation} for $0 \leq k \leq n$ so that 
\begin{equation}
\sum_{k = 0}^n \dim (DR_W)_{k,n-k} = \mathrm{Cat}(n+1).
\end{equation}
\end{corollary}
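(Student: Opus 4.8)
The plan is to read off the graded dimensions from Theorem~\ref{fermion-module-structure} and recognize the resulting binomial expression as a Narayana number. Fix $0 \leq k \leq n$ and apply Theorem~\ref{fermion-module-structure} with $i = k$, $j = n-k$, so that $i + j = n$ and the ``$\wedge^{-1}$'' convention is in force when $k = 0$ or $k = n$. Taking dimensions in the Grothendieck group and using $\dim \wedge^m V = \dim \wedge^m V^* = {n \choose m}$ gives
\begin{equation}
\dim (DR_W)_{k,n-k} = {n \choose k}{n \choose n-k} - {n \choose k-1}{n \choose n-k-1} = {n \choose k}^2 - {n \choose k-1}{n \choose k+1},
\end{equation}
where in the last step I use ${n \choose n-k} = {n \choose k}$, ${n \choose n-k-1} = {n \choose k+1}$, and the boundary terms vanish automatically. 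So the whole corollary reduces to the binomial identity ${n \choose k}^2 - {n \choose k-1}{n \choose k+1} = \mathrm{Nar}(n+1,k+1) = \frac{1}{n+1}{n+1 \choose k+1}{n+1 \choose k}$.

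First I would verify this identity by brute force. Writing ${n \choose k-1} = {n \choose k}\cdot\frac{k}{n-k+1}$ and ${n \choose k+1} = {n \choose k}\cdot\frac{n-k}{k+1}$, the left-hand side becomes ${n \choose k}^2\left(1 - \frac{k(n-k)}{(k+1)(n-k+1)}\right) = {n \choose k}^2 \cdot \frac{(k+1)(n-k+1) - k(n-k)}{(k+1)(n-k+1)} = {n \choose k}^2 \cdot \frac{n+1}{(k+1)(n-k+1)}$. On the other side, ${n+1 \choose k+1}{n+1 \choose k} = {n \choose k}\frac{n+1}{k+1}\cdot{n \choose k}\frac{n+1}{n-k+1}$ divided by $n+1$ gives exactly ${n \choose k}^2 \cdot \frac{n+1}{(k+1)(n-k+1)}$, matching. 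Alternatively one could cite that ${n \choose k}^2 - {n \choose k-1}{n \choose k+1}$ is a standard formula for $\mathrm{Nar}(n+1,k+1)$ and point to a combinatorial reference, but the one-line computation is cleaner and self-contained.

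For the summation statement, I would simply sum the first identity over $k = 0, \dots, n$, which on the right gives $\sum_{k=0}^n \mathrm{Nar}(n+1,k+1) = \sum_{j=1}^{n+1} \mathrm{Nar}(n+1,j) = \mathrm{Cat}(n+1)$ by the identity $\sum_{k=1}^m \mathrm{Nar}(m,k) = \mathrm{Cat}(m)$ recalled just before the corollary. I do not anticipate any real obstacle here: the content is entirely a bookkeeping consequence of Theorem~\ref{fermion-module-structure} plus an elementary binomial manipulation. The only point requiring a moment's care is the boundary behavior at $k = 0$ and $k = n$ — there one must make sure the $\wedge^{-1}$ term really does drop out, which it does since $\mathrm{Nar}(n+1,1) = \mathrm{Nar}(n+1,n+1) = 1 = {n \choose 0}^2$, consistent with $(DR_W)_{0,n}$ and $(DR_W)_{n,0}$ being one-dimensional (spanned by $1$ times the appropriate top exterior power).
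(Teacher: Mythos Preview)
Your proof is correct and follows exactly the approach of the paper: the paper's proof also invokes Theorem~\ref{fermion-module-structure} and reduces to the identity $\mathrm{Nar}(n+1,k+1) = {n \choose k}{n \choose n-k} - {n \choose k-1}{n \choose n-k-1}$, which it simply declares ``straightforward.'' You have supplied the explicit verification of that identity and the boundary checks, so your write-up is a fleshed-out version of the same argument.
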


\begin{proof} Thanks to Theorem~\ref{fermion-module-structure}
one need only verify the identity
\begin{equation}
\mathrm{Nar}(n+1,k+1) = {n \choose k}  {n \choose n-k} - {n \choose k-1} {n \choose n-k-1}, 
\end{equation}
which is straightforward.
\end{proof}

For any reflection group $W$, there are Catalan and Narayana numbers attached to
$W$ (see for example \cite{ARR}); the numbers appearing in Corollary~\ref{dimension-of-dr-catalan}
are their type A instances, and depend only on the rank of $W$.

\begin{remark}
\label{other-g}
The results in this section and the next apply equally well to any finite group $G$ and any 
$n$-dimensional $G$-module $V$ for which the exterior powers
$\wedge^0 V, \wedge^1 V, \dots, \wedge^n V$ are pairwise nonisomorphic irreducibles.
The proofs go through {\em mutatis mutandis}.
\end{remark}

\section{Motzkin paths and standard bases}
\label{Motzkin}

In this section we describe the standard monomial basis of 
$DR_W$ (with respect to a term order $\prec$ which we will define) in terms of a certain family of lattice paths.
A {\em Motzkin path} is a lattice path in $\ZZ^2$ consisting of
up-steps $(1,1)$, down-steps $(1,-1)$, and horizontal steps $(1,0)$ which starts at the origin,
ends on the $x$-axis, and never sinks below the $x$-axis. We consider a variant of Motzkin 
paths which have decorated horizontal steps and need not end on the $x$-axis.

Let $\Pi(n)$ be the family of $n$-step lattice paths $\sigma = (s_1, \dots, s_n)$ in $\ZZ^2$ which start at the origin 
and consist of up-steps $(1,1)$, down-steps $(1,-1)$, and horizontal steps
$(1,0)$ in which each horizontal step is decorated with a $\theta$ or a $\xi$.
We let $\Pi(n)_{\geq 0} \subseteq \Pi(n)$ be the family of paths which never sink below the $x$-axis.
Two paths in $\Pi(9)$ are shown in Figure~\ref{path-figure};
the top path lies in $\Pi(9)_{\geq 0}$ but the bottom path does not.

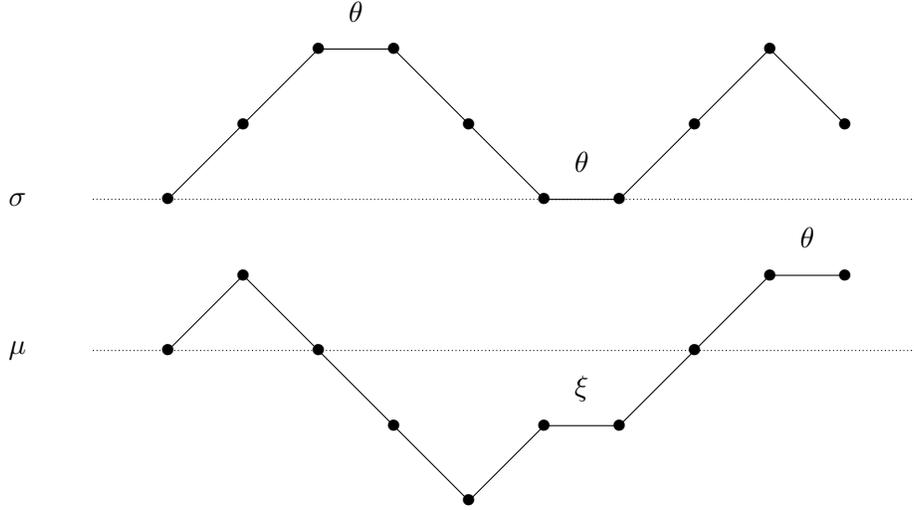
\begin{figure}

\begin{tikzpicture} 
\node (A0) at (-2,0) {$\sigma$};

\node (A1) at (0,0) {$\bullet$};

\node (A2) at (1,1) {$\bullet$};

\node (A3) at (2,2) {$\bullet$};

\node (A4) at (3,2) {$\bullet$};

\node (A5) at (4,1) {$\bullet$};

\node (A6) at (5,0) {$\bullet$};

\node (A7) at (6,0) {$\bullet$};

\node (A8) at (7,1) {$\bullet$};

\node (A9) at (8,2) {$\bullet$};

\node (A10) at (9,1) {$\bullet$};

\node at (2.5,2.5) {$\theta$};

\node at (5.5,0.5) {$\theta$};

\draw (0,0) -- (1,1) -- (2,2) -- (3,2) -- (4,1) -- (5,0) -- (6,0) -- (7,1) -- (8,2) -- (9,1);

\draw [densely dotted] (-1,0) -- (10,0);

\end{tikzpicture}

\begin{tikzpicture}
\node (A0) at (-2,0) {$\mu$};

\node (A1) at (0,0) {$\bullet$};

\node (A2) at (1,1) {$\bullet$};

\node (A3) at (2,0) {$\bullet$};

\node (A4) at (3,-1) {$\bullet$};

\node (A5) at (4,-2) {$\bullet$};

\node (A6) at (5,-1) {$\bullet$};

\node (A7) at (6,-1) {$\bullet$};

\node (A8) at (7,0) {$\bullet$};

\node (A9) at (8,1) {$\bullet$};

\node (A10) at (9,1) {$\bullet$};

\node at (5.5,-0.5) {$\xi$};

\node at (8.5,1.5) {$\theta$};

\draw (0,0) -- (1,1) -- (2,0) -- (3,-1) -- (4,-2) -- (5,-1) -- (6,-1) -- (7,0) -- (8,1) -- (9,1);

\draw [densely dotted] (-1,0) -- (10,0);

\end{tikzpicture}

\caption{Two paths in $\Pi(9)$.}
\label{path-figure}
\end{figure}

The {\em depth} $d(\sigma)$ of a path $\sigma \in \Pi(n)$ is the minimum $y$-value attained by $\sigma$.
If $\sigma$ and $\mu$ are as in Figure~\ref{path-figure} then
$d(\sigma) = 0$ and $d(\mu) = -2$.
We have
\begin{equation}
\Pi(n)_{\geq 0} = \{ \sigma \in \Pi(n) \,:\, d(\sigma) = 0 \}
\end{equation}
and $d(\sigma) < 0$ for any $\sigma \in \Pi(n) - \Pi(n)_{\geq 0}$.

Let $\sigma = (s_1, \dots, s_n) \in \Pi(n)$.  The {\em weight} of the $i^{th}$ step $s_i$ of $\sigma$ is 
\begin{equation}
\wt(\sigma) := \begin{cases}
1 & \text{if $s_i = (1,1)$ is an up-step} \\
\theta_i & \text{if $s_i = (1,0)$ is decorated with $\theta$} \\
\xi_i & \text{if $s_i = (1,0)$ is decorated with $\xi$} \\
\theta_i \xi_i & \text{if $s_i = (1,-1)$ is a down-step}
\end{cases}
\end{equation}
and the weight of $\sigma$ is the product
\begin{equation}
\wt(\sigma) := \wt(s_1) \cdots \wt(s_n)
\end{equation}
of the steps of $\sigma$ in the order in which they appear.  For the paths $\sigma$ and $\mu$
in Figure~\ref{path-figure} we have
\begin{equation}
\wt(\sigma) = \theta_3 \cdot \theta_4 \xi_4 \cdot \theta_5 \xi_5 \cdot \theta_6 \cdot \theta_9 \xi_9
\quad \text{and} \quad
\wt(\mu) = \theta_2 \xi_2 \cdot \theta_3 \xi_3 \cdot \theta_4 \xi_4 \cdot \xi_6 \cdot \theta_9.
\end{equation}

A moment's thought shows that 
$\sigma \mapsto \wt(\sigma)$ gives a bijection from $\Pi(n)$ to the set of monomials in 
$\wedge \{ \Theta_n, \Xi_n\}$, where monomials with differing signs are considered equivalent.
We will identify paths $\sigma$ with their monomials $\wt(\sigma)$.

The {\em (total) degree} of a path $\sigma$ is
\begin{equation}
\deg(\sigma) := n - \text{(the terminal $y$-coordinate of $\sigma$)}.
\end{equation}
This is simply the total number of exterior generators $\theta_i$ and $\xi_i$ appearing in the monomial $\sigma$.
We define the {\em $\theta$-degree} $\deg_{\theta}(\sigma)$ and 
{\em $\xi$-degree} $\deg_{\xi}(\sigma)$ analogously.  Combinatorially,
\begin{equation}
\deg_{\theta}(\sigma) = \text{(number of down-steps)}  + \text{(number of $\theta$-horizontal steps)}
\end{equation}
and 
\begin{equation}
\deg_{\xi}(\sigma) = \text{(number of down-steps)}  + \text{(number of $\xi$-horizontal steps)}.
\end{equation}
If $\sigma$ and $\mu$ are as in Figure~\ref{path-figure} then
\begin{equation}
\begin{cases}
\deg(\sigma) = 8 \\
\deg_{\theta}(\sigma) = 5 \\
\deg_{\xi}(\sigma) = 3
\end{cases}  \quad \text{and} \quad
\begin{cases}
\deg(\mu) = 8 \\
\deg_{\theta}(\mu) = 4 \\
\deg_{\xi}(\mu) = 4
\end{cases}
\end{equation}

We introduce the  total order $\prec$ on paths $\sigma \in \Pi(n)$, or on monomials in 
$\wedge \{ \Theta_n, \Xi_n \}$ given by
\begin{equation}
\sigma \prec \sigma' \Leftrightarrow
\begin{cases}
\deg(\sigma) < \deg(\sigma') & \text{or} \\
\deg(\sigma) = \deg(\sigma') \text{ and } d(\sigma) > d(\sigma') & \text{or} \\
\deg(\sigma) = \deg(\sigma') \text{ and } d(\sigma) = d(\sigma') \text{ and }
\sigma <_{\mathrm{lex}} \sigma'
\end{cases}
\end{equation}
where in the last branch $<_\mathrm{lex}$ means the lexicographical order on
the paths $\sigma = (s_1, \dots, s_n)$
and $\sigma' = (s_1', \dots, s_n')$ induced by declaring the step order
\begin{equation}
\label{step-lex}
(1,1) < \text{$(1,0)$ with $\theta$-decoration} < \text{$(1,0)$ with $\xi$-decoration} < (1,-1).
\end{equation}
The collection of paths/monomials with a given bidegree $(i,j)$ form a subinterval of 
$\prec$ for all $0 \leq i, j \leq n$.
In our running example of Figure~\ref{path-figure}, we have $\deg(\sigma) = \deg(\mu)$
but $d(\sigma) > d(\mu)$ so that $\sigma \prec \mu$.

\begin{lemma}
\label{is-term-order}
The total order $\prec$ is a term order for $\wedge \{ \Theta_n, \Xi_n \}$.
\end{lemma}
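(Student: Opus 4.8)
The plan is to verify directly the two defining conditions of a term order for $\wedge\{\Theta_n,\Xi_n\}$: that the empty monomial $1$ is the minimum, and that $\prec$ is compatible with multiplication by a monomial of disjoint support. The first condition is immediate from the construction, since the primary comparison key of $\prec$ is the total degree $\deg$ — the number of exterior generators appearing in a monomial — and $\deg(1)=0$ while $\deg(\sigma)\ge 1$ for every other monomial $\sigma$; thus $1\prec\sigma$ for all $\sigma\ne 1$.

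For the multiplicativity condition, suppose $\sigma\prec\sigma'$ and let $\tau$ be a monomial whose support is disjoint from the supports of both $\sigma$ and $\sigma'$, so that $\sigma\tau$ and $\sigma'\tau$ are nonzero; I must show $\sigma\tau\prec\sigma'\tau$. Because every monomial is a product of the single generators it contains, and because any sub-monomial of $\tau$ still has support disjoint from $\sigma$ and $\sigma'$, it is enough to handle the case in which $\tau$ is a single generator $\theta_i$ or $\xi_i$ and then iterate. Since $\deg$ is additive, $\deg(\sigma\tau)=\deg(\sigma)+\deg(\tau)$, the case $\deg(\sigma)<\deg(\sigma')$ gives $\deg(\sigma\tau)<\deg(\sigma'\tau)$ and we are done; so we may assume $\deg(\sigma)=\deg(\sigma')$, hence $\deg(\sigma\tau)=\deg(\sigma'\tau)$, and the comparison of $\sigma\tau$ with $\sigma'\tau$ falls to the depth and lexicographic keys.

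Next I would track what multiplication by a single generator does to the associated lattice path, writing $h_\sigma(k)$ for the $y$-coordinate of $\sigma$ after $k$ steps. Multiplying $\sigma$ by a disjoint generator $\theta_i$ changes the $i$-th step of $\sigma$ either from an up-step to a $\theta$-horizontal step or from a $\xi$-horizontal step to a down-step (and symmetrically for $\xi_i$), lowering the height increment at step $i$ by $1$ and leaving all other steps unchanged; hence $h_{\sigma\tau}$ agrees with $h_\sigma$ on steps $0,\dots,i-1$ and equals $h_\sigma-1$ on steps $i,\dots,n$. From this, $d(\sigma\tau)$ is the minimum of $\min_{k<i}h_\sigma(k)$ and $\bigl(\min_{k\ge i}h_\sigma(k)\bigr)-1$, and a short estimate shows $d(\sigma\tau)\ge d(\sigma)-1\ge d(\sigma')\ge d(\sigma'\tau)$ whenever $d(\sigma)>d(\sigma')$; in particular if $d(\sigma\tau)=d(\sigma'\tau)$ then both equal $d(\sigma')$. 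At the same time the lexicographic comparison is preserved: the first coordinate at which the step-sequences of $\sigma\tau$ and $\sigma'\tau$ differ is the same as the first coordinate at which those of $\sigma$ and $\sigma'$ differ, and the modification above respects the step order $\text{up}<\theta\text{-hor}<\xi\text{-hor}<\text{down}$ at that coordinate. Combining these observations with the definition of $\prec$ — total degree, then the depth inequality, with remaining ties broken by the compatible lexicographic clause — yields $\sigma\tau\prec\sigma'\tau$.

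I expect the main obstacle to be precisely the boundary case just isolated: when the depth key alone decides $\sigma\prec\sigma'$ (so $d(\sigma)=d(\sigma')+1$ in the tight case) but the depths of $\sigma\tau$ and $\sigma'\tau$ collapse to a common value, the comparison must be settled by the lexicographic key, and one must check that this key resolves it in the direction consistent with $\sigma\prec\sigma'$. Verifying this compatibility between the depth statistic and the lexicographic order — through a case analysis according to whether the minimum $y$-value of $\sigma$ is attained strictly before or at/after the modified step $i$, and tracking where the step-sequences of $\sigma$ and $\sigma'$ first diverge — is the part of the argument requiring the most care.
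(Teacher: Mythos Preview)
Your analysis is careful and you correctly isolate the crux: when $\sigma\prec\sigma'$ is decided by depth alone (say $d(\sigma)=d(\sigma')+1$) and multiplication by a single generator at position $i$ causes the depths of $\sigma\tau$ and $\sigma'\tau$ to collapse to the common value $d(\sigma')$, the comparison must fall to the lexicographic key, and nothing in the hypotheses tells you which way the lex comparison of $\sigma$ and $\sigma'$ goes. You flag this as ``the main obstacle'' and expect a case analysis to close it, but in fact it cannot be closed: the lemma as stated is false. Take $n=4$, $\sigma=\xi_1\theta_2\xi_3$ (path $\xi$-hor, $\theta$-hor, $\xi$-hor, up; heights $0,0,0,0,1$; depth $0$), $\sigma'=\theta_1\theta_2\xi_2$ (path $\theta$-hor, down, up, up; heights $0,0,-1,0,1$; depth $-1$), and $\tau=\theta_3$, which is disjoint from both. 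Then $\deg(\sigma)=\deg(\sigma')=3$ and $d(\sigma)=0>-1=d(\sigma')$, so $\sigma\prec\sigma'$. After multiplying, $\sigma\tau$ has path $\xi$-hor, $\theta$-hor, down, up (heights $0,0,0,-1,0$; depth $-1$) while $\sigma'\tau$ has path $\theta$-hor, down, $\theta$-hor, up (heights $0,0,-1,-1,0$; depth $-1$); the degrees and depths now agree, lex decides at step $1$, and since $\xi$-hor $>\theta$-hor in the step order one obtains $\sigma\tau\succ\sigma'\tau$.

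The paper's own proof glosses over exactly this point, asserting without justification that ``depth \dots\ [is] respected by multiplication.'' The underlying problem is that the step order \eqref{step-lex} is not \emph{strictly} compatible with height increments (the two horizontal step types share the increment $0$), so a strict depth comparison $d(\sigma)>d(\sigma')$ can degenerate to equality after multiplication while the lexicographic tiebreak goes the wrong way. Thus the obstacle you identify is genuine and fatal, not merely a matter of additional care; both your argument and the paper's one-line justification fail at the same place.
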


\begin{proof}
The first branch of the definition of $\prec$ guarantees that the monomial $1$ with path
consisting of a sequence of $n$ up-steps is the minimum monomial under $\prec$.
Checking that $\prec$ respects multiplication amounts to the observation that total degree,
depth, and lexicographical order are all respected by multiplication.
\end{proof}

It turns out that 
the set $\{ \wt(\sigma) \,:\, \sigma \in \Pi(n)_{\geq 0} \}$ descends to a $\CC$-basis of 
$DR_W$.  In fact, we prove something stronger.

\begin{theorem}
\label{standard-monomial-basis}
The set $\{ \wt(\sigma) \,:\, \sigma \in \Pi(n)_{\geq 0} \}$ is the standard monomial basis of $DR_W$
with respect to $\prec$.
\end{theorem}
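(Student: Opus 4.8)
The plan is to show that $\{ \wt(\sigma) \,:\, \sigma \in \Pi(n)_{\geq 0} \}$ is exactly the set of standard monomials $N(\langle \delta_n \rangle)$ with respect to $\prec$. Since we already know from Corollary~\ref{dimension-of-dr} (via Theorem~\ref{fermion-module-structure}) that $\dim DR_W = \binom{2n+1}{n}$, and since the standard monomial basis always has cardinality equal to $\dim DR_W$, it suffices to prove the two inclusions in a way that reduces to a cardinality count: namely, (i) every $\wt(\sigma)$ with $\sigma \in \Pi(n)_{\geq 0}$ is a standard monomial (i.e.\ is NOT a leading monomial of any element of $\langle \delta_n \rangle$), and (ii) $|\Pi(n)_{\geq 0}| = \binom{2n+1}{n}$. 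Then, since the standard monomials span $DR_W$ and $\{\wt(\sigma) : \sigma \in \Pi(n)_{\geq 0}\}$ is a subset of them of the full size, the two sets coincide.

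First I would handle the combinatorial count (ii): a path in $\Pi(n)_{\geq 0}$ is an $n$-step path from the origin with up-, down-, and $\theta$- or $\xi$-decorated horizontal steps, staying weakly above the $x$-axis. A standard reflection/cycle-lemma argument (or a direct bijection to the ``ballot'' objects counted by $\binom{2n+1}{n}$, equivalently $\sum_{k}\binom{n}{k}\binom{n}{n-k} + \binom{n}{k-1}\binom{n}{n-k-1}$ as in the proof of Corollary~\ref{dimension-of-dr}) gives the count. Concretely, I would match $\Pi(n)_{\ge 0}$ against the monomial description: $\deg_\theta(\sigma) = i$, $\deg_\xi(\sigma) = j$ forces the number of down steps to be between $\max(0,i+j-n)$-ish and $\min(i,j)$, and the Motzkin/ballot constraint $d(\sigma)=0$ precisely cuts the count of bidegree-$(i,j)$ paths down from $\binom{n}{i}\binom{n}{j}$ to $\binom{n}{i}\binom{n}{j} - \binom{n}{i-1}\binom{n}{j-1}$, matching $\dim (DR_W)_{i,j}$ term by term. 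This per-bidegree refinement is cleaner than a global count and will also be what yields Corollary~\ref{hilbert-motzkin}.

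The heart of the matter is (i): showing $\wt(\sigma)$ for $\sigma \in \Pi(n)_{\ge 0}$ is never a leading monomial in $\langle \delta_n \rangle$. The natural strategy is the contrapositive via the transition formula~\eqref{basic-transition}: if $\theta_S$ (identified with its path) is a leading monomial, it equals $\LM(\delta_n \cdot g)$ for some $g$, and I want to argue that such a leading monomial must have strictly negative depth, i.e.\ lie in $\Pi(n) - \Pi(n)_{\ge 0}$. The key local fact is that multiplying a monomial $\vv(A,B)$ by $\delta_n$ replaces the path of $\vv(A,B)$ by a sum of paths each obtained by turning one ``up-step-like'' index (an index not in $A\cup B$, which corresponds to an up-step in the path picture) into a down-step at that position — this strictly decreases depth and strictly increases degree. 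Under $\prec$, higher degree dominates, and among equal degrees, smaller depth dominates; so $\LM(\delta_n \cdot g)$ is obtained from $\LM(g)$ by the ``lowest/leftmost'' such index flip, and one shows inductively that iterating this can only ever produce monomials/paths that dip below the axis. I expect the main obstacle to be making this leading-term analysis airtight: one must verify that no cancellation among the various $\vv(C,D)$ terms destroys the predicted leading monomial, and that the lexicographic tiebreaker~\eqref{step-lex} (up $<$ $\theta$-horizontal $<$ $\xi$-horizontal $<$ down) is compatible with the flip so that the leading term of $\delta_n \cdot \vv(A,B)$ is genuinely the term where the \emph{smallest} available index becomes a down step. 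Once that local claim is established, a clean induction on degree (or on the number of down steps) shows $\LM(I)$ consists exactly of (equivalence classes of) monomials whose paths have negative depth, hence $N(I) = \{\wt(\sigma) : \sigma \in \Pi(n)_{\ge 0}\}$, completing the proof.
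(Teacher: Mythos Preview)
Your overall plan --- prove one containment between $\Pi(n)_{\ge 0}$ and $N(I_n)$ and then match cardinalities bidegree by bidegree --- is the paper's strategy too, and your counting argument (ii) is essentially what the paper does (it verifies that the bigraded Hilbert series of $DR_W$ and the bidegree generating function of $\Pi(n)_{\ge 0}$ satisfy the same recursion). The difference is \emph{which} containment you attempt: you aim for $\Pi(n)_{\ge 0}\subseteq N(I_n)$ (no element of $\langle\delta_n\rangle$ has a nonnegative-depth leading monomial), whereas the paper shows $N(I_n)\subseteq\Pi(n)_{\ge 0}$ by, for each negative-depth path $\sigma$, \emph{constructing} an element of $\langle\delta_n\rangle$ with leading monomial $\sigma$, via an induction on $n$ that peels off the last step of $\sigma$.

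Your direction has a genuine gap. The assertion that $\LM(\delta_n\cdot g)$ is obtained from $\LM(g)$ by flipping the smallest available up-step to a down-step is false in general: cancellation can and does destroy that term. For $n=4$ take $g=\vv(\{2\},\{2\})-\vv(\{1\},\{1\})$; then $\LM(g)=\vv(\{1\},\{1\})$ and your recipe predicts $\LM(\delta_4\cdot g)=\vv(\{1,2\},\{1,2\})$, but by~\eqref{basic-transition} the two $\vv(\{1,2\},\{1,2\})$ contributions cancel and one finds $\LM(\delta_4\cdot g)=\vv(\{1,3\},\{1,3\})$ instead. So the cancellation issue you flagged as ``the main obstacle'' is not merely something to be verified --- the claim actually fails, and nothing in your sketch explains why, after arbitrary cancellations among the $\delta_n\cdot\mu$ for all monomials $\mu$ appearing in $g$, the surviving leading term must still dip below the axis. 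That is a universal statement over the entire ideal, and the paper avoids it by proving the existential direction: it is far easier to exhibit one ideal element per negative-depth path than to rule out all ideal elements for each nonnegative-depth path.
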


\begin{proof}
Let $I_n = \langle \delta_n \rangle \subseteq \wedge \{ \Theta_n, \Xi_n \}$ be the defining ideal of $DR_W$
(here we apply Proposition~\ref{casimir-generation}).  Identifying paths with monomials, we want to show
$N(I_n) = \Pi(n)_{\geq 0}$ with respect to $\prec$.  We proceed by induction on $n$, with the base case $n = 1$
being immediate.

Suppose $n > 1$ and $\sigma = (s_1, \dots, s_n) \in \Pi(n) - \Pi(n)_{\geq 0}$. In particular, we have $d(\sigma) < 0$.
The following lemma will show inductively that $\sigma \notin N(I_n)$.

\begin{lemma}
\label{sigma-reduction}
The monomial $\sigma$ lies in $\LM(I_n)$ or else $\sigma = 0$ in the quotient $DR_W$.
\end{lemma}
\begin{proof} (of Lemma~\ref{sigma-reduction})
Let $\sigma_0 \in \wedge \{ \Theta_{n-1}, \Xi_{n-1} \}$ be the monomial $\sigma$ with its last step $s_n$
removed. The proof breaks into cases depending on the step $s_n$.

{\bf Case 1:} {\em The last step $s_n$ is a horizontal step (of either decoration $\theta$ or $\xi$).}

We assume the decoration of $s_n$ is $\theta$; the other case is similar.
In this case, we have $\sigma_0 \in \Pi(n-1) - \Pi(n-1)_{\geq 0}$ and $\sigma = \sigma_0 \theta_n$.  
We may inductively assume that 
$\sigma_0 \in \LM(I_{n-1})$ so that $\sigma_0 = \LM(f  \cdot \delta_{n-1})$ for some polynomial 
$f \in \wedge \{ \Theta_{n-1}, \Xi_{n-1} \}$.  Since
\begin{equation}
f \cdot \delta_n \cdot \theta_n =  f \cdot \delta_{n-1} \theta_n + f \cdot \theta_n \xi_n \cdot \theta_n 
= f \cdot \delta_{n-1} \cdot \theta_n,
\end{equation}
we conclude that $f \cdot \delta_{n-1} \cdot \theta_n \in I_n$. We have
\begin{equation}
\LM(f \cdot \delta_{n-1} \cdot \theta_n) = \LM(f \cdot \delta_{n-1}) \cdot \theta_n = \sigma_0 \cdot \theta_n = \sigma,
\end{equation}
completing the proof of Case 1.

{\bf Case 2:} {\em The last step $s_n$ is a down-step $(1,-1)$.}

If $\sigma_0 \in \Pi(n-1) - \Pi(n-1)_{\geq 0}$ sinks below the $x$-axis, the proof is similar to that of 
Case 1. One right-multiplies $f \cdot \delta_n$ by $\theta_n \xi_n$ instead of $\theta_n$; we leave the details to the 
reader.

In this case we could have $\sigma_0 \in \Pi(n-1)_{\geq 0}$, but this would imply that $\sigma_0$ ends on the 
$x$-axis, so that $\deg(\sigma) = \deg(\sigma_0) + 2 = (n-1) + 2 = n+1$.  
Theorem~\ref{fermion-module-structure} then  forces $\sigma = 0$ in the quotient $DR_W$, completing the 
proof of Case 2.

{\bf Case 3:} {\em The last step $s_n$ is an up-step $(1,1)$.}

This is the most involved case. We have $\sigma_0 = \sigma$ and $\sigma_0 \in \Pi(n-1) - \Pi(n-1)_{\geq 0}$.
By induction, we may assume that there is $f \in \wedge \{ \Theta_{n-1}, \Xi_{n-1} \}$ with 
$\sigma_0 = \LM(f \cdot \delta_{n-1})$.  Now consider 
\begin{equation}
\label{test-polynomial}
f \cdot \delta_n = f \cdot \delta_{n-1} + f \cdot \theta_n \xi_n \in I_n.
\end{equation}
By discarding redundant terms if necessary, we may assume that $f$ is bi-homogeneous.
The monomial $\sigma = \sigma_0$ is the $\prec$-largest monomial appearing in $f \cdot \delta_{n-1}$.
Since $\sigma$ does not involve $\theta_n$ or $\xi_n$, it does not appear in $f \cdot \theta_n \xi_n$.
We will have $\sigma = \LM(f \cdot \delta_n)$ unless some monomial $\mu$ appearing in $f \cdot \theta_n \xi_n$
satisfies $\mu \succ \sigma$.  

Let $\mu$ be the $\prec$-largest element of $f \cdot \theta_n \xi_n$ and assume $\sigma \prec \mu$.
Let $\mu_0 \in \Pi(n-1)$ be the path obtained from $\mu$ by removing its last step (which is necessarily
a down-step since $\mu$ appears in $f \cdot \theta_n \xi_n$).  Since $\sigma \prec \mu$, the bihomogeneity
of $f$ forces 
$d(\mu) \leq d(\sigma) < 0$.

{\bf Subcase 3.1:}  {\em We have $\mu_0 \in \Pi(n-1)_{\geq 0}$, or equivalently $d(\mu_0) \geq 0$.}

Since $d(\mu) = d(\mu_0) + 1 < 0$, this can only happen if $d(\mu_0) = 0$ and $\mu_0$ ends at the lattice 
points $(n-1,0)$.  This implies that $\deg(\mu) = \deg(\mu_0) + 2 = (n-1) + 2 = n+1$ and
Theorem~\ref{fermion-module-structure} forces $\mu \in I_n$.  
We may therefore discard the term involving $\mu$ from
 \eqref{test-polynomial} and still have an element of $I_n$ involving $\sigma$.
 
 {\bf Subcase 3.2:}  {\em We have $\mu_0 \in \Pi(n-1) - \Pi(n-1)_{\geq 0}$, or equivalently $d(\mu_0) < 0$.}
 
 In this case, we induct on $n$ to obtain some polynomial $g \in \wedge \{ \Theta_{n-1}, \Xi_{n-1} \}$ 
 whose leading monomial is $\mu_0 = \LM( g \cdot \delta_{n-1})$.  We calculate
 \begin{equation}
 \LM (g \cdot \delta_n \cdot \theta_n \xi_n) = \LM(g \cdot \delta_{n-1} \cdot \theta_n \xi_n) =
 \LM(g \cdot \delta_{n-1}) \cdot \theta_n \xi_n = \mu_0 \cdot \theta_n \xi_n = \mu
 \end{equation}
 where the second equality used the fact that $g \cdot \delta_{n-1}$ does not involve $\theta_n$ or $\xi_n$.
Since $\sigma$ does not involve $\theta_n$ or $\xi_n$, it does not appear in 
$g \cdot \delta_n \cdot \theta_n \xi_n$.  We may therefore replace \eqref{test-polynomial} by 
\begin{equation}
\label{test-polynomial-two}
f \cdot \delta_{n-1} + (f - g \cdot \delta_{n-1}) \cdot \theta_n \xi_n \in I_n
\end{equation}
to obtain another element of $I_n$ which involves $\sigma$ only in its first term, still satisfies
$\sigma = \LM(f \cdot \delta_{n-1})$, but now only involves monomials $\prec \mu$.

Iterating the arguments of Subcases 3.1 and 3.2, we see that $\sigma \in \LM(I_n)$, proving both
Case 3 and the lemma.
\end{proof}
We complete the proof of Theorem~\ref{standard-monomial-basis} using Lemma~\ref{sigma-reduction}.
Lemma~\ref{sigma-reduction} implies $N(I_n) \subseteq \Pi(n)_{\geq 0}$, and to force equality it suffices to verify
\begin{equation}
\dim DR_W = | \Pi(n)_{\geq 0} |. 
\end{equation} 
In fact, we verify the equality of bigraded Hilbert series
\begin{equation}
\Hilb(DR_W; q, t) = \sum_{\sigma \in \Pi(n)_{\geq 0}} q^{\deg_{\theta}(\sigma)} t^{\deg_{\xi}(\sigma)} =: P_n(q,t).
\end{equation}

If we let $\Pi(n)_{= 0} \subseteq \Pi(n)_{\geq 0}$ be the subset of paths that end on the $x$-axis and let
\begin{equation}
P'_n(q,t) := \sum_{\sigma \in \Pi(n)_{= 0}}  q^{\deg_{\theta}(\sigma)} t^{\deg_{\xi}(\sigma)},
\end{equation}
considering the addition of one more step to a path yields
\begin{equation}
\label{path-recursion}
P_{n+1}(q,t) = (1 + q + t + qt) \cdot P_n(q,t) - (qt) \cdot P_n'(q,t).
\end{equation}
On the other hand (adopting the notation $DR_{W(n)}$ for $DR_W$ whenever $W$ has rank $n$) 
Theorem~\ref{fermion-module-structure} yields
\begin{equation}
\label{piece-dimension}
\dim (DR_{W(n+1)})_{i,j} = \begin{cases}
{n + 1 \choose i} \cdot {n+1 \choose j} - {n + 1 \choose i-1} \cdot {n+1 \choose j-1} &
 \text{if $i, j > 0$ and $i + j \leq n+1$} \\
{n + 1 \choose i} \cdot {n+1 \choose j} & \text{if $i = 0$ or $j = 0$}  \\
0 & \text{if $i + j > n+1$}
\end{cases}
\end{equation}
It can be shown using the Pascal identity and Equation~\eqref{piece-dimension} that 
\begin{equation}
\Hilb(DR_{W(n+1)};q,t) = (1 + q + t + qt) \cdot \Hilb(DR_{W(n)};q,t)  - 
(qt) \cdot \sum_{i+j = n +1} \dim (DR_{W(n)})_{i,j} \cdot q^i t^j,
\end{equation}
which matches the combinatorial recursion in Equation~\eqref{path-recursion}.
\end{proof}

In the course of proving Theorem~\ref{standard-monomial-basis}, we derived the following 
combinatorial expression for the bigraded Hilbert series of $DR_W$.

\begin{corollary}
\label{hilbert-motzkin}
If $W$ has rank $n$, we have 
\begin{equation}
\Hilb(DR_W; q,t) = \sum_{\sigma \in \Pi(n)_{\geq 0}} q^{\deg_{\theta}(\sigma)} t^{\deg_{\xi}(\sigma)}.
\end{equation}
\end{corollary}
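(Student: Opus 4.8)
The plan is to prove Corollary~\ref{hilbert-motzkin} as a direct consequence of Theorem~\ref{standard-monomial-basis}, together with the bookkeeping already set up in its proof. Since Theorem~\ref{standard-monomial-basis} identifies the standard monomial basis of $DR_W$ (with respect to the term order $\prec$) as the set $\{ \wt(\sigma) \,:\, \sigma \in \Pi(n)_{\geq 0} \}$, and since the standard monomials descend to a $\CC$-basis of the quotient $\wedge \{ \Theta_n, \Xi_n \}/I_n = DR_W$, the bigraded Hilbert series of $DR_W$ is obtained by summing the bidegree monomials of these basis elements. First I would recall that the bidegree of $\wt(\sigma)$ is precisely $(\deg_\theta(\sigma), \deg_\xi(\sigma))$: this is immediate from the definition of the weight $\wt(\sigma)$ as a product over the steps of $\sigma$, where an up-step contributes nothing, a $\theta$-horizontal step contributes a single $\theta_i$, a $\xi$-horizontal step a single $\xi_i$, and a down-step the pair $\theta_i \xi_i$. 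Summing $q^i t^j$ over the basis then gives exactly $\sum_{\sigma \in \Pi(n)_{\geq 0}} q^{\deg_\theta(\sigma)} t^{\deg_\xi(\sigma)}$.

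There is essentially nothing new to obstruct here, since the substantive content — both the identification of the standard monomial basis and the verification that the combinatorial generating function $P_n(q,t)$ matches $\Hilb(DR_W;q,t)$ — was already carried out inside the proof of Theorem~\ref{standard-monomial-basis}. The only point worth a sentence of care is that the grading on $DR_W$ used to define its Hilbert series is the bigrading inherited from $\wedge (V \oplus V^*)$, under which $\theta_i$ sits in bidegree $(1,0)$ and $\xi_i$ in bidegree $(0,1)$; the leading-monomial reduction respects this bigrading because the defining ideal $I_n = \langle \delta_n \rangle$ is bihomogeneous (as $\delta_n$ lies in bidegree $(1,1)$) and the term order $\prec$ refines the bidegree, so the standard monomial basis is a bihomogeneous basis. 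Hence counting basis monomials by bidegree genuinely computes the bigraded Hilbert series.

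Concretely, the proof would read: by Theorem~\ref{standard-monomial-basis} the set $\{ \wt(\sigma) \,:\, \sigma \in \Pi(n)_{\geq 0} \}$ is a bihomogeneous $\CC$-basis of $DR_W$; the element $\wt(\sigma)$ has $\theta$-degree $\deg_\theta(\sigma)$ and $\xi$-degree $\deg_\xi(\sigma)$ by definition of the weight; therefore
\begin{equation*}
\Hilb(DR_W; q,t) = \sum_{\sigma \in \Pi(n)_{\geq 0}} q^{\deg_\theta(\sigma)} t^{\deg_\xi(\sigma)},
\end{equation*}
as claimed. If desired one can add the remark that this is literally the quantity $P_n(q,t)$ whose recursion \eqref{path-recursion} was matched against \eqref{piece-dimension} in the proof of Theorem~\ref{standard-monomial-basis}, so the corollary is simply recording a formula that emerged along the way. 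The ``main obstacle'' is thus only presentational — deciding whether to re-derive the statement from the term-order machinery or to cite the generating-function identity already established — and no further mathematical input is required.
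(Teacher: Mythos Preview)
Your proposal is correct and matches the paper's approach exactly: the paper does not give a separate proof of this corollary at all, but simply remarks that the identity was derived in the course of proving Theorem~\ref{standard-monomial-basis}. Your write-up just makes explicit the (standard) reason why a bihomogeneous standard monomial basis computes the bigraded Hilbert series, which is fine and requires no further input.
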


\section{The permutation representation of $\symm_n$}
\label{Permutation}

In the coinvariant theory of the symmetric group $\symm_n$, it is more common to consider 
its $n$-dimensional permutation representation $U$ as opposed to 
its $(n-1)$-dimensional reflection representation $V$
In this  section we describe how to translate our results into this setting.

The following decompositions of $U$ and $U^*$ into $\symm_n$-irreducibles are well-known:
\begin{equation}
U = V \oplus U^{\symm_n} \quad \text{and} \quad
U^* = V^* \oplus (U^*)^{\symm_n}.
\end{equation}
It follows that 
\begin{align}
\wedge (U \oplus U^*) &\cong \wedge [ ( V \oplus U^{\symm_n}) \oplus (V^* \oplus (U^*)^{\symm_n})]  \\
&\cong \wedge[ (V \oplus V^*) \oplus (U^{\symm_n} \oplus (U^*)^{\symm_n})]  \\
&\cong [\wedge(V \oplus V^*)] \otimes [\wedge( U^{\symm_n} \oplus (U^*)^{\symm_n})].
\end{align}
Modding out by ideals generated by $\symm_n$-invariants with vanishing constant term, we see that 
\begin{equation}
\label{final-isomorphism}
\wedge(U \otimes U^*)/
\langle \wedge (U \otimes U^*)^{\symm_n}_+ \rangle \cong
\wedge(V \otimes V^*)/
\langle \wedge (V \otimes V^*)^{\symm_n}_+ \rangle.
\end{equation}

Let $\symm_n$ act on $\wedge\{ \Theta_n, \Xi_n \}$ diagonally, viz.
$w.\theta_i := \theta_{w(i)}$ and $w.\xi_i := \xi_{w(i)}$.
Expressing the left-hand side of \eqref{final-isomorphism} in terms of coordinates, we have the following 
translation of Theorem~\ref{fermion-module-structure}, 
Corollary~\ref{dimension-of-dr}, and Corollary~\ref{dimension-of-dr}.

\begin{theorem}
\label{permutation-structure-theorem}
Let $DR_n$ be the bigraded $\symm_n$-module
\begin{equation}
DR_n := \wedge\{ \Theta_n, \Xi_n \}/\langle \wedge\{ \Theta_n, \Xi_n \}^{\symm_n}_+ \rangle.
\end{equation}
We have $(DR_n)_{i,j} = 0$ whenever $i + j \geq n$.  If $i + j < n$, we have
\begin{equation}
\Frob (DR_n)_{i,j} = s_{(n-i,1^i)} * s_{(n-j,1^j)} - s_{(n-i+1,1^{i-1})} * s_{(n-j+1,1^{j-1})}
\end{equation}
where $*$ denotes Kronecker product.
Here we interpret $s_{(n+1,-1)} = 0$.  We have
\begin{equation}
\label{zabrocki-conjecture}
\dim DR_n = {2n -1 \choose n}
\end{equation}
and, for $1 \leq k \leq n$, we have
\begin{equation}
\dim (DR_n)_{k-1,n-k} = \mathrm{Nar}(n,k)
\end{equation}
so that $\sum_{k = 1}^n \dim (DR_n)_{k-1,n-k} = \mathrm{Cat}(n)$.
\end{theorem}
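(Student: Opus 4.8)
The plan is to deduce the whole statement from the rank-$(n-1)$ instances of Theorem~\ref{fermion-module-structure}, Corollary~\ref{dimension-of-dr}, and Corollary~\ref{dimension-of-dr-catalan}, fed through the isomorphism \eqref{final-isomorphism} together with a dictionary between exterior powers of the reflection representation of $\symm_n$ and hook Specht modules. Concretely, \eqref{final-isomorphism} identifies $DR_n$, as a bigraded $\symm_n$-module, with $DR_W$ for $W = \symm_n$ acting on its $(n-1)$-dimensional reflection representation $V$; the key point is that this $W$ has \emph{rank} $n-1$, which is the source of all the index shifts appearing below. I would first note that \eqref{final-isomorphism} is an isomorphism of \emph{bigraded} modules: in the factorization $\wedge(U\oplus U^*)\cong \wedge(V\oplus V^*)\otimes \wedge(U^{\symm_n})\otimes \wedge((U^*)^{\symm_n})$ the two rightmost tensor factors are spanned by the invariants $\theta_1+\cdots+\theta_n$ and $\xi_1+\cdots+\xi_n$, which sit in bidegrees $(1,0)$ and $(0,1)$, so passing to the quotient kills exactly these factors and leaves $(DR_n)_{i,j}\cong (DR_W)_{i,j}$ for all $i,j$.

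Next I would record the representation-theoretic dictionary. It is classical that the exterior powers of the reflection representation of $\symm_n$ are the hook-shaped irreducibles, $\wedge^i V\cong S^{(n-i,1^i)}$ for $0\le i\le n-1$ (this can be proved by induction from the decomposition $\wedge^i\CC^n\cong S^{(n-i,1^i)}\oplus S^{(n-i+1,1^{i-1})}$ of the signed permutation module on $i$-subsets, together with $\CC^n\cong V\oplus\triv$, or simply quoted from standard references). Since every $\symm_n$-module is self-dual, $V^*\cong V$ and hence $\wedge^j V^*\cong S^{(n-j,1^j)}$ as well. In the Grothendieck ring this says $[\wedge^i V]=[S^{(n-i,1^i)}]$ and $[\wedge^j V^*]=[S^{(n-j,1^j)}]$, and applying the Frobenius characteristic converts each Grothendieck-ring product into a Kronecker product $*$.

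With the dictionary in place the three assertions follow mechanically. Applying Theorem~\ref{fermion-module-structure} to $W$ of rank $n-1$ turns the vanishing range $i+j>n-1$ into $i+j\ge n$, and for $i+j<n$ it turns the Grothendieck-group identity $[(DR_W)_{i,j}]=[\wedge^i V]\cdot[\wedge^j V^*]-[\wedge^{i-1}V]\cdot[\wedge^{j-1}V^*]$, after taking $\Frob$ and substituting, into $\Frob(DR_n)_{i,j}=s_{(n-i,1^i)}*s_{(n-j,1^j)}-s_{(n-i+1,1^{i-1})}*s_{(n-j+1,1^{j-1})}$, with the boundary conventions $\wedge^{-1}=0$ and $s_{(n+1,-1)}=0$ matching exactly. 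Corollary~\ref{dimension-of-dr} gives $\dim DR_n=\binom{2(n-1)+1}{n-1}=\binom{2n-1}{n-1}=\binom{2n-1}{n}$, which is \eqref{zabrocki-conjecture}. Corollary~\ref{dimension-of-dr-catalan} for $W$ of rank $n-1$ reads $\dim(DR_W)_{k,(n-1)-k}=\mathrm{Nar}(n,k+1)$ for $0\le k\le n-1$; the substitution $k=k'-1$ yields $\dim(DR_n)_{k'-1,n-k'}=\mathrm{Nar}(n,k')$ for $1\le k'\le n$, and summing over $k'$ gives $\mathrm{Cat}(n)$ by the identity $\sum_k \mathrm{Nar}(n,k)=\mathrm{Cat}(n)$ recalled earlier.

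I expect no serious obstacle here; the content is essentially packaging already-proven results. The one point deserving genuine care is the dictionary $\wedge^i V\cong S^{(n-i,1^i)}$ for the reflection representation and its dual, and keeping the two independent shifts straight: the rank drop $n\rightsquigarrow n-1$ inherited from \eqref{final-isomorphism}, and the reindexing $k\rightsquigarrow k-1$ in the Narayana statement. Everything else is substitution and the Pascal/Catalan bookkeeping already contained in the cited corollaries.
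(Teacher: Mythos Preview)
Your proposal is correct and follows exactly the approach the paper takes: the paper presents Theorem~\ref{permutation-structure-theorem} as a direct ``translation'' of Theorem~\ref{fermion-module-structure} and Corollaries~\ref{dimension-of-dr} and~\ref{dimension-of-dr-catalan} through the bigraded isomorphism~\eqref{final-isomorphism}, with no further argument given. You have supplied precisely the details the paper leaves implicit---the hook-Specht dictionary $\wedge^i V\cong S^{(n-i,1^i)}$, the self-duality $V^*\cong V$, and the rank shift $n\rightsquigarrow n-1$---and tracked the index bookkeeping correctly throughout.
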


Equation~\eqref{zabrocki-conjecture} was conjectured by Mike Zabrocki \cite{ZabrockiFermion}
for the Open Problems in Algebraic Combinatorics 2020 Conference\footnote{which was delayed due to COVID-19, 
but will hopefully take place soon}.
We also have a lattice path basis of the $\symm_n$-module $DR_n$  
in Theorem~\ref{permutation-structure-theorem}.
For a partition $\lambda \vdash n$, work of Rosas \cite{Rosas} implies that 
\begin{equation}
\langle \grFrob(DR_n; q, t), s_{\lambda} \rangle = 0
\end{equation}
unless the partition $\lambda = (\lambda_1 \geq \lambda_2 \geq \lambda_3  \geq \cdots )$ satisfies $\lambda_3 \leq 2$
(i.e. the Young diagram of $\lambda$ is a union of two possibly empty hooks).
While these multiplicities can be less than aesthetic in general, they are nice when 
$\lambda$ is a hook.
Recall that the {\em $q,t$-analog} of $n$ is given by
\begin{equation}
[n]_{q,t} := \frac{q^n-t^n}{q-t} = q^{n-1} + q^{n-2} t + \cdots + q t^{n-2} + t^{n-1}.
\end{equation}

\begin{proposition}
\label{hook-multiplicities}
The graded multiplicities of the trivial and sign representations in $DR_n$ are given by
\begin{equation}
\langle \grFrob(DR_n;q,t), s_{(n)} \rangle = 1 \quad \text{and} \quad
\langle \grFrob(DR_n;q,t), s_{(1^n)} \rangle = [n]_{q,t}.
\end{equation}
If $0 < k < n-1$ we have
\begin{equation}
\label{dr-hook-multiplicities}
\langle \grFrob(DR_n;q,t), s_{(n-k,1^k)} \rangle = [k+1]_{q,t} + (qt) \cdot [k]_{q,t}.
\end{equation}
\end{proposition}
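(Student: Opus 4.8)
The plan is to read off the coefficient of $s_{(n-k,1^k)}$ in $\grFrob(DR_n;q,t)$ from Theorem~\ref{permutation-structure-theorem}, reduce it to a difference of Kronecker coefficients of hook shapes, insert the classical formula for those coefficients, and observe that the difference telescopes. Taking Hall inner products with $s_{(n-k,1^k)}$ in the formula of Theorem~\ref{permutation-structure-theorem} and writing
\begin{equation*}
g_n(a,b,c) := \langle s_{(n-a,1^a)} * s_{(n-b,1^b)}, \, s_{(n-c,1^c)} \rangle
\end{equation*}
for the Kronecker coefficient of the three hooks (a symmetric function of $a,b,c$, equal to $0$ once $a<0$ or $b<0$, consistent with the convention $s_{(n+1,1^{-1})}=0$), we obtain
\begin{equation*}
\langle \grFrob(DR_n;q,t), \, s_{(n-k,1^k)} \rangle \;=\; \sum_{i+j<n} \bigl( g_n(i,j,k) - g_n(i-1,j-1,k) \bigr)\, q^i t^j .
\end{equation*}

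The key input is the decomposition of a Kronecker product of two hooks. This is classical and is exactly the computation behind Rosas's result \cite{Rosas} cited above; it can also be derived from the character identity $\sum_{i\ge 0}\chi^{(n-i,1^i)}(\sigma)\,q^i = (1+q)^{-1}\prod_c\bigl(1-(-q)^{\ell(c)}\bigr)$ (product over the cycles $c$ of $\sigma$, read off from the eigenvalues of $\sigma$ on the permutation and reflection representations) together with the exponential formula for sums over cycle types. The outcome is that $g_n(a,b,c)\in\{0,1\}$, with $g_n(a,b,c)=1$ precisely when $(a,b,c)$ satisfies the three triangle inequalities $a\le b+c$, $b\le a+c$, $c\le a+b$ as well as $a+b+c\le 2(n-1)$ --- in particular there is no parity condition, unlike the $\GL$ analogue. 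Since $i+j<n$ forces $i+j+k\le 2(n-1)$ automatically, in the relevant range this reads: $g_n(i,j,k)=1$ if and only if $|i-j|\le k\le i+j$, and $g_n(i-1,j-1,k)=1$ if and only if $|i-j|\le k\le i+j-2$.

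Now the telescoping. Subtracting these two indicator conditions, for $i,j\ge 1$ the intervals $[\,|i-j|,\,i+j\,]$ and $[\,|i-j|,\,i+j-2\,]$ differ only in their top two values, so the summand equals $1$ when $k\in\{i+j-1,\,i+j\}$ and $0$ otherwise; for $i=0$ or $j=0$ the shifted term is absent and the summand is $1$ if and only if $k=i+j$. Hence
\begin{equation*}
\langle \grFrob(DR_n;q,t),\, s_{(n-k,1^k)} \rangle \;=\; \sum_{\substack{i+j=k \\ i,j\ge 0}} q^i t^j \;+\; \sum_{\substack{i+j=k+1 \\ i,j\ge 1}} q^i t^j ,
\end{equation*}
both sums still restricted to $i+j<n$. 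Since $k<n$, the first sum is the full antidiagonal $[k+1]_{q,t}$; the second sum equals $qt\,[k]_{q,t}$ when $k+1<n$ and is empty when $k=n-1$ (its antidiagonal $i+j=k+1=n$ being excluded). This gives $[k+1]_{q,t}+qt\,[k]_{q,t}$ for $0<k<n-1$, which specializes to $1$ at $k=0$ (as $[1]_{q,t}=1$, $[0]_{q,t}=0$) and to $[n]_{q,t}$ at $k=n-1$, matching the three asserted identities.

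The crux is the middle step: pinning down exactly when the hook Kronecker coefficient $g_n(a,b,c)$ vanishes, including the correct behaviour at the boundary indices $0$ and $n-1$ and the (perhaps unexpected) absence of a parity constraint. A self-contained substitute for quoting \cite{Rosas} is to compute $\dim\bigl(\wedge^a U\otimes\wedge^b U\otimes\wedge^c U\bigr)^{\symm_n}$ for the permutation module $U=\CC^n$ by counting $\symm_n$-orbits of triples of subsets whose four odd-parity Venn regions each have at most one element (a short computation of signs on point stabilizers), and then to descend to the reflection representation using $\sum_{i\ge 0}[\wedge^i V]\,q^i = (1+q)^{-1}\sum_{i\ge 0}[\wedge^i U]\,q^i$ and a brief inclusion--exclusion; the telescoping of the last step is then purely mechanical.
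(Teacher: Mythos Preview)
Your proof is correct and follows essentially the same route as the paper: apply Theorem~\ref{permutation-structure-theorem} to reduce to a difference of hook Kronecker coefficients, invoke Rosas's explicit formula for those coefficients, and then do the casework. Your presentation is somewhat cleaner than the paper's---you phrase Rosas's conditions as triangle inequalities plus the bound $a+b+c\le 2(n-1)$, which makes the ``telescoping'' of the two indicator functions transparent, and you treat the boundary cases $k=0$ and $k=n-1$ uniformly with the generic case rather than handling the trivial and sign multiplicities by separate ad hoc arguments as the paper does. The sketched alternative derivation of the hook Kronecker rule via orbit-counting is a nice aside but not needed for the argument.
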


\begin{proof}
The equation $\langle \grFrob(DR_n;q,t), s_{(n)} \rangle = 1$ is immediate since $DR_n$ 
is obtained from $\wedge \{ \Theta_n, \Xi_n \}$ by modding out by $\symm_n$-invariants with vanishing constant term.
The multiplicity of the signed representation follows from Theorem~\ref{permutation-structure-theorem}
and the fact that for any partitions $\lambda, \mu \vdash n$
\begin{equation}
\text{multiplicity of $s_{(1^n)}$ in $s_{\lambda} * s_{\mu}$} = \begin{cases}
1 & \text{if $\mu = \lambda'$} \\
0 & \text{otherwise}
\end{cases}
\end{equation}
where $\lambda'$ is the conjugate (transpose) partition of $\lambda$.

We turn our attention to Equation~\eqref{dr-hook-multiplicities}.
For any statement $P$, let $\chi(P) = 1$ if $P$ is true and $\chi(P) = 0$ if $P$ is false.
Rosas proves \cite[Proof of Thm. 13 (4)]{Rosas} that the multiplicity of the Schur function 
$s_{(n-c,1^c)}$ in the Kronecker product $s_{(n-a,1^a)} * s_{(n-b,1^b)}$ is
\begin{equation}
\label{rosas-formula}
\langle s_{(n-a,1^a)} * s_{(n-b,1^b)}, s_{(n-c,1^c)} \rangle = 
\chi(|b-a| \leq c) \times \chi(c \leq a+b \leq 2n - c - 2)
\end{equation}
whenever $0 < a, b < n$ and $0 < c < n-1$.

 For any $0 \leq k \leq n-1$ and all $i + j < n$, we have
\begin{multline}
\langle \Frob(DR_n)_{i,j}, s_{(n-k,1^k)} \rangle =  \\
\langle s_{(n-i,1^i)} * s_{(n-j,1^j)}, s_{(n-k, 1^k)} \rangle - 
\langle s_{(n-i+1,1^{i-1})} * s_{(n-j+1,1^{j-1})}, s_{(n-k, 1^k)} \rangle
\end{multline}
A somewhat tedious casework using Equation~\eqref{rosas-formula} yields
\begin{equation}
\langle \Frob (DR_n)_{i,j},  s_{(n-k,1^k)} \rangle = 
\begin{cases}
1 & \text{if $i+j = k$} \\
1 & \text{if $i+j = k+1$ and $i,j > 0$} \\
0 & \text{otherwise}
\end{cases}
\end{equation}
which is equivalent to Equation~\eqref{dr-hook-multiplicities}.
\end{proof}

In order to state a $DR_n$-analog of Theorem~\ref{standard-monomial-basis},
we need some notation.
We define the {\em primed weight} $\wt'(s)$ of a step $s$ to be
\begin{equation}
\begin{cases}
1 & \text{if $s = (1,1)$ is an up-step} \\
\theta_i & \text{if $s = (1,0)$ is decorated with $\theta$} \\
\xi_i' & \text{if $s = (1,0)$ is decorated with $\xi$} \\
\theta_i \xi_i' & \text{if $s = (1,-1)$ is a down-step}
\end{cases}
\end{equation}
where 
\begin{equation}
\xi_i' := \xi_i + \sum_{j = 2}^n \xi_j.
\end{equation}
The {\em primed weight} $\wt'(\sigma)$ of a path $\sigma \in \Pi(n)$ with steps
$\sigma = (s_1, \dots, s_n)$ is $\wt'(\sigma) := \wt'(s_1) \cdots \wt'(s_n)$.  
Let $\Pi(n)_{> 0} \subseteq \Pi(n)$ consist of those paths which only meet the $x$-axis
at their starting point $(0,0)$ and stay strictly above the $x$-axis otherwise.

\begin{theorem}
\label{permutation-basis}
The set $\{ \wt'(\sigma) \,:\, \sigma \in \Pi(n)_{> 0} \}$ descends to a basis of $DR_n$.
Consequently, we have
\begin{equation}
\Hilb(DR_n;q,t)  = \sum_{\sigma \in \Pi(n)_{> 0}} q^{\deg_{\theta}(\sigma)} t^{\deg_{\xi}(\sigma)}.
\end{equation}
\end{theorem}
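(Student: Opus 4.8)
The plan is to deduce Theorem~\ref{permutation-basis} from Theorem~\ref{standard-monomial-basis} by making the isomorphism~\eqref{final-isomorphism} completely explicit in coordinates. Write $e := \theta_1 + \cdots + \theta_n$, $f := \xi_1 + \cdots + \xi_n$, and $\Delta := \theta_1 \xi_1 + \cdots + \theta_n \xi_n$ for the three basic positive-degree $\symm_n$-invariants of $\wedge \{ \Theta_n, \Xi_n \}$. First I would check that the defining ideal of $DR_n$ is exactly $\langle e, f, \Delta \rangle$: via the $\symm_n$-equivariant decomposition $\wedge \{ \Theta_n, \Xi_n \} \cong \wedge(V \oplus V^*) \otimes \wedge(\CC e \oplus \CC f)$, on whose second tensor factor $\symm_n$ acts trivially, Proposition~\ref{casimir-generation} shows the invariant subring is $\CC$-spanned by the products $\delta_W^k \, e^{\epsilon} \, f^{\epsilon'}$ ($k \geq 0$, $\epsilon, \epsilon' \in \{0,1\}$), each nonconstant one of which lies in $\langle \delta_W, e, f \rangle$. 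Since $\Delta = \delta_W + \frac{1}{n} e f$ (as $\Delta = \mathrm{id}_U$ and $\delta_W = \mathrm{id}_V$ under $U = V \oplus \CC e$), we have $\langle \delta_W, e, f \rangle = \langle \Delta, e, f \rangle$, so $DR_n = \wedge \{ \Theta_n, \Xi_n \} / \langle e, f, \Delta \rangle$.

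Next I would eliminate $\theta_1$ and $\xi_1$. Quotienting by the linear forms $e$ and $f$ identifies $\wedge \{ \Theta_n, \Xi_n \} / \langle e, f \rangle$ with the exterior algebra $\wedge \{ \theta_2, \dots, \theta_n, \xi_2, \dots, \xi_n \}$ through $\theta_1 \mapsto -(\theta_2 + \cdots + \theta_n)$ and $\xi_1 \mapsto -(\xi_2 + \cdots + \xi_n)$, under which a one-line substitution sends $\Delta$ to
\[
\overline{\Delta} = \Big( \sum_{k=2}^n \theta_k \Big) \Big( \sum_{\ell=2}^n \xi_\ell \Big) + \sum_{k=2}^n \theta_k \xi_k = \sum_{k=2}^n \theta_k \, \xi'_k, \qquad \xi'_k := \xi_k + \sum_{j=2}^n \xi_j,
\]
which is precisely why the primed variables are defined as they are. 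The change of variables $(\xi_2, \dots, \xi_n) \mapsto (\xi'_2, \dots, \xi'_n)$ has matrix $I + J$ (identity plus all-ones), with determinant $n \neq 0$, so $\theta_2, \dots, \theta_n, \xi'_2, \dots, \xi'_n$ freely generate $\wedge \{ \theta_2, \dots, \theta_n, \xi_2, \dots, \xi_n \}$. Relabelling $\widetilde{\theta}_k := \theta_{k+1}$ and $\widetilde{\xi}_k := \xi'_{k+1}$ for $1 \leq k \leq n-1$ produces a bigraded algebra isomorphism
\[
DR_n \; \cong \; \wedge \{ \widetilde{\Theta}_{n-1}, \widetilde{\Xi}_{n-1} \} / \langle \widetilde{\theta}_1 \widetilde{\xi}_1 + \cdots + \widetilde{\theta}_{n-1} \widetilde{\xi}_{n-1} \rangle = \wedge \{ \widetilde{\Theta}_{n-1}, \widetilde{\Xi}_{n-1} \} / \langle \delta_{n-1} \rangle,
\]
to which Theorem~\ref{standard-monomial-basis} (in rank $n-1$) applies, giving the $\CC$-basis $\{ \wt(\widetilde{\sigma}) \,:\, \widetilde{\sigma} \in \Pi(n-1)_{\geq 0} \}$ of the right-hand side.

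Finally I would transport this basis back and identify it with $\Pi(n)_{>0}$. Any path in $\Pi(n)_{>0}$ must begin with an up-step (a first horizontal or down step would land the path on or below the $x$-axis after the origin), and deleting that initial up-step while shifting the remaining $n-1$ steps one unit down and one position left is a bijection $\Phi \colon \Pi(n)_{>0} \xrightarrow{\,\sim\,} \Pi(n-1)_{\geq 0}$. Comparing weights step by step: a $\theta$-, $\xi$-, down-, or up-step in position $p+1$ of $\sigma$ has primed weight $\theta_{p+1}$, $\xi'_{p+1}$, $\theta_{p+1} \xi'_{p+1}$, or $1$, which is the image under the relabelling of the weight $\widetilde{\theta}_p$, $\widetilde{\xi}_p$, $\widetilde{\theta}_p \widetilde{\xi}_p$, or $1$ of step $p$ of $\Phi(\sigma)$; hence the image of $\wt(\Phi(\sigma))$ in $DR_n$ equals the image of $\wt'(\sigma)$. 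As $\Phi$ is a bijection, $\{ \wt'(\sigma) \,:\, \sigma \in \Pi(n)_{>0} \}$ descends to a basis of $DR_n$. Because $\Phi$ only deletes an up-step it preserves $\deg_{\theta}$ and $\deg_{\xi}$, so this basis is bihomogeneous with $\wt'(\sigma)$ in bidegree $(\deg_{\theta}(\sigma), \deg_{\xi}(\sigma))$, and the stated Hilbert series identity follows (equivalently, from Corollary~\ref{hilbert-motzkin} in rank $n-1$ transported along $\Phi$).

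I expect the real work to be the coordinate bookkeeping in the first two steps, and in particular seeing that $\xi'_k$ must be $\xi_k + \sum_{j \geq 2} \xi_j$ so that the image of the diagonal invariant $\Delta$ becomes the standard Casimir $\sum_k \theta_k \xi'_k$, which is what permits Theorem~\ref{standard-monomial-basis} to be invoked without modification; after the isomorphism $DR_n \cong \wedge \{ \widetilde{\Theta}_{n-1}, \widetilde{\Xi}_{n-1} \} / \langle \delta_{n-1} \rangle$ is in hand, the path bijection and weight comparison are routine.
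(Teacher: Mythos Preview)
Your proposal is correct and follows essentially the same route as the paper's proof: both identify the defining ideal of $DR_n$ as $\langle e, f, \Delta \rangle$, eliminate $\theta_1, \xi_1$ via the linear relations, rewrite the image of $\Delta$ as $\sum_{k \geq 2} \theta_k \xi'_k$ with $\xi'_k = \xi_k + \sum_{j \geq 2} \xi_j$, check that the transition matrix $I+J$ is invertible, and then invoke Theorem~\ref{standard-monomial-basis} in rank $n-1$. Your write-up is in fact more explicit than the paper's in two places: you supply the Casimir identity $\Delta = \delta_W + \tfrac{1}{n} ef$ to justify $\langle e,f,\Delta\rangle = \langle e,f,\delta_W\rangle$, and you spell out the bijection $\Pi(n)_{>0} \xrightarrow{\sim} \Pi(n-1)_{\geq 0}$ (delete the forced initial up-step) together with the step-by-step weight comparison showing $\wt'(\sigma)$ matches $\wt(\Phi(\sigma))$, whereas the paper simply says ``Theorem~\ref{standard-monomial-basis} applies to complete the proof.''
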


\begin{proof}
Proposition~\ref{casimir-generation} and the discussion prior to Theorem~\ref{permutation-structure-theorem}
imply that 
the invariant subalgebra $\wedge \{ \Theta_n, \Xi_n \}^{\symm_n}$ is generated by the three elements
\begin{equation*}
\theta_1 + \cdots + \theta_n, \, \, \xi_1 + \cdots + \xi_n, \, \, \text{ and } \, \, 
\theta_1 \xi_1 + \cdots + \theta_n \xi_n
\end{equation*}
and consequently
\begin{equation}
DR_n = \wedge \{ \theta_1, \dots, \theta_n, \xi_1, \dots, \xi_n \} / 
\langle \theta_1 + \cdots + \theta_n, \xi_1 + \cdots + \xi_n, \theta_1 \xi_1 + \cdots + \theta_n \xi_n \rangle.
\end{equation}

We express $DR_n$ as a successive quotient
\begin{align}
	DR_n &= \wedge \{ \theta_1, \dots, \theta_n, \xi_1, \dots, \xi_n \} / \langle  \theta_1 + \cdots + \theta_n ,  \xi_1 + \cdots +  \xi_n, \theta_1 \xi_1 + \cdots + \theta_n \xi_n \rangle.\\
	&= \left(\wedge \{ \theta_1, \dots, \theta_n\}/\langle \sum_{i=1}^{n} \theta_i  \rangle  \otimes \wedge \{ \xi_1, \dots, \xi_n\}/\langle  \sum_{i=1}^{n} \xi_i  \rangle \right)  / \langle \sum_{i=1}^{n} \theta_i \otimes \xi_i \rangle
\end{align}
Then as graded vector spaces, we identify $\theta_1 = -\theta_2 - \cdots -\theta_n$ and $\xi_1 = -\xi_2 - \cdots - \xi_n$ to obtain
\begin{align}
DR_n &\cong  \left(\wedge \{ \theta_2, \dots, \theta_n\}  \otimes \wedge \{ \xi_2, \dots, \xi_n\} \right)  / \langle (-\theta_2 - \cdots - \theta_n) \otimes (-\xi_2 - \cdots - \xi_n) + \sum_{i=2}^{n} \theta_i \otimes \xi_i \rangle\\
&= \left(\wedge \{ \theta_2, \dots, \theta_n\}  \otimes \wedge \{ \xi_2, \dots, \xi_n\} \right)  / \langle \sum_{i=2}^{n} \theta_i \otimes (\xi_i + \sum_{j=2}^{n} \xi_j  ) \rangle
\end{align}
The transition matrix from the set 
$\{\xi_2 + \sum_{j=2}^{n} \xi_j, \dots, \xi_n + \sum_{j=2}^{n} \xi_j\} = \{ \xi'_2, \dots, \xi'_n\}$ 
to the standard basis $\{ \xi_2, \dots, \xi_n \}$ of the degree 1 component of  $\wedge \{ \xi_2, \dots, \xi_n\}$ 
is
\begin{align*}
\begin{pmatrix}
2 & 1 & \cdots & 1\\
1 & 2 & \cdots & 1\\
\vdots & \vdots & \ddots & \vdots\\
1 & 1 & \cdots & 2
\end{pmatrix}
\end{align*}
which is easily checked to be invertible.  Therefore, the set $\{\xi_2', \dots, \xi_n'\}$ is also a basis
of the degree $1$ component of $\wedge \{ \xi_2, \dots, \xi_n \}$ and we may write
\begin{equation}
DR_n \cong \wedge \{ \theta_2, \dots, \theta_n, \xi_2', \dots, \xi_n' \} / 
\langle \theta_2 \xi_2' + \cdots + \theta_n \xi_n' \rangle.
\end{equation}
Theorem~\ref{standard-monomial-basis} applies to complete the proof.
\end{proof}

\section{Open Problems}
\label{Open}

The key result underpinning our analysis of $DR_W$ and $DR_n$ was the Lefschetz
Theorem~\ref{delta-lefschetz}.
Our proof was combinatorial and ultimately relied on the Boolean poset $B(n)$.
Given the importance of Lefschetz elements in geometry, it is natural to ask the following.

\begin{question}
Is there a geometric proof of Theorem~\ref{delta-lefschetz}?
\end{question}

Modern variants of the HLP and PD were used to great effect in the work of Adiprasito,
Huh, and Katz on the Chow rings of matroids \cite{AHK}.
Is there a deeper meaning to the HLP and PD as they apply to exterior algebras?
Perhaps the realization of $\wedge \{ \Theta_n, \Xi_n \}$ as the holomorphic tangent space to the
origin in $\CC^n \oplus \CC^n$ would be relevant here.

It may also be interesting to consider combining two sets of commuting and anticommuting variables
to get a ring
\begin{equation}
\CC[X_n, Y_n] \otimes \wedge \{ \Theta_n, \Xi_n\} :=
\CC[x_1, \dots, x_n, y_1, \dots, y_n] \otimes \wedge \{ \theta_1, \dots, \theta_n, \xi_1, \dots, \xi_n \}
\end{equation}
which may be identified with the algebra of polynomial-valued holomorphic differential forms on $\CC^n \oplus \CC^n$.
This ring is quadruply graded, and the diagonal action of $\symm_n$ gives rise to a coinvariant space
$\CC[X_n, Y_n, \Theta_n, \Xi_n] / \langle \CC[X_n, Y_n, \Theta_n, \Xi_n]^{\symm_n}_+ \rangle$.
Setting the $\xi$-variables to zero, Zabrocki \cite{Zabrocki} conjectured that the triply graded Frobenius
image of this quotient is given by the {\em Delta Conjecture} of Haglund, Remmel, and Wilson \cite{HRW}.
Furthermore, again when the $\xi$-variables are set to zero,
 Haglund and Sergel \cite{HS} have a conjectural monomial basis of this 
quotient which would extend a basis of the diagonal coinvariants due to Carlsson and Oblomkov \cite{CO}.

\begin{problem}
\label{basis-problem}
Find a  basis of the quotient 
$\CC[X_n, Y_n, \Theta_n, \Xi_n] / \langle \CC[X_n, Y_n, \Theta_n, \Xi_n]^{\symm_n}_+ \rangle$
which generalizes the basis of 
$\CC[X_n, Y_n] / \langle \CC[X_n, Y_n]^{\symm_n}_+ \rangle$ due to Carlsson-Oblomkov \cite{CO}
and the conjectural basis of 
$\CC[X_n, Y_n, \Theta_n] / \langle \CC[X_n, Y_n, \Theta_n]^{\symm_n}_+ \rangle$
due to Haglund-Sergel \cite{HS}.
\end{problem}

A solution to Problem~\ref{basis-problem} might be obtained by interpolating between the 
parking function `schedules' present in \cite{CO, HS} and our Motzkin-like paths $\Pi(n)_{> 0}$.

Let $X_{k \times n} = (x_{i,j})_{1 \leq i \leq k, 1 \leq j \leq n}$ be a $k \times n$ matrix of commuting variables 
and let $\CC[X_{k \times n}]$ be the polynomial ring in these variables.
The ring $\CC[X_{k \times n}]$ carries a $\symm_n$-module structure inherited from column permutation
and the quotient
$\CC[X_{k \times n}] / \langle \CC[X_{k \times n}]^{\symm_n}_+ \rangle$
is a $(\ZZ_{\geq 0})^k$-graded $\symm_n$-module.
When $k = 2$, we recover the classical diagonal coinvariants. F. Bergeron  has many fascinating conjectures
about this object obtained by letting the parameter $k$ grow \cite{Bergeron}.

We can carry out the contstruction of the previous paragraph with a  matrix
$\Theta_{k \times n} = (\theta_{i,j})_{1 \leq i \leq k, 1 \leq j \leq n}$ of anticommuting variables.
We still have an action of $\symm_n$ on columns and can still consider the quotient
\begin{equation}
R(k \times n) := \wedge \{ \Theta_{k \times n} \} / \langle \wedge \{ \Theta_{k \times n} \}^{\symm_n}_+ \rangle.
\end{equation}
In the case $k = 2$ we recover $DR_n$.
For stability results involving such quotients, and corresponding quotients using 
both commuting and anticommuting variables, see \cite{PRR}.

\begin{question}
\label{multigraded}
Find the multigraded isomorphism type of $R(k \times n)$.
\end{question}

It is unclear how to use Lefschetz Theory to solve Question~\ref{multigraded} for $k > 2$.  For any set 
$S \subseteq  \{1, 2, \dots, k \}$ of rows, we have a $\symm_n$-invariant
\begin{equation}
\delta_S := \prod_{i \in S} \theta_{i,1} + \prod_{i \in S} \theta_{i,2} + \cdots + \prod_{i \in S} \theta_{i,n} \in
\wedge \{ \Theta_{k \times n} \}
\end{equation}
where the products are taken in increasing order of $i \in S$.  When $|S|$ is even, this has the potential
to be a Lefschetz element, but $\delta_S^2 = 0$ when $|S|$ is odd.
For $|S| = 1$, the row sum $\delta_S$ may be easy to handle, but the situation becomes more
complicated as an odd-sized set $S$ grows.
Furthermore, one would have to understand how the various images of multiplication
by the $\delta_S$ between bidegrees intersect as $S$ varies.

\section{Acknowledgements}

B. Rhoades was partially supported by NSF Grant DMS-1500838. 
The authors thank Jim Haglund, Eugene Gorsky, Vic Reiner, Vasu Tewari, and Mike Zabrocki for helpful 
conversations and help with references.
B. Rhoades is very grateful to
Vic Reiner for a very helpful Zoom conversation about, among other things, Casimir elements.

\end{document}